\pdfoutput=1
\documentclass[10pt,reqno]{amsart}
\usepackage{bbm}
\usepackage{amsmath,amsthm,amsfonts,amssymb,amscd,mathrsfs}
\usepackage{amssymb,amsfonts,amsthm, color, algorithm}
\usepackage{bm, textcomp}
\usepackage{psfrag,graphicx}
\usepackage{epic,eepic}
\usepackage{color}
\usepackage{ebezier}
\usepackage{epsfig}
\usepackage{epstopdf}
\usepackage{abstract}

\theoremstyle{plain}
\newtheorem{Main}{Theorem}

\newtheorem{Theorem}{Theorem}
\newtheorem{Prop}[Theorem]{Proposition}
\newtheorem{Lemma}[Theorem]{Lemma}

\newtheorem*{Remark}{Remark}

\theoremstyle{remark}

\newtheorem{Definition}[Theorem]{\bf Definition}

\newcommand{\orb}{\operatorname{orb}}

\def\ln{\operatorname{ln}}

\def\top{\operatorname{top}}

\def\max{\operatorname{max}}
\def\min{\operatorname{min}}

\def\exp{\operatorname{exp}}

\def\vep{\varepsilon}

\addtolength{\topmargin}{-0.5cm}
\addtolength{\textheight}{0cm}

\setlength{\textwidth}{14.3cm}
\setlength{\textheight}{23cm}
\addtolength{\oddsidemargin}{-0.82cm}
\addtolength{\evensidemargin}{-0.82cm}

\begin{document}

\title[\,]{Quasi-shadowing  property for nonuniformly partially hyperbolic systems }
\author{Gang Liao and Xuetong Zu }

\thanks{2010 {\it Mathematics Subject Classification}.  37C50,  37D25, 37D30}

\keywords{ Nonuiformly partially hyperbolic set, quasi-shadowing  property,  the growth of quasi-periodic  points}

\thanks{School of Mathematical Sciences, Center for Dynamical Systems and Differential Equations, Soochow University,
	Suzhou 215006,  China;  This research  was partially supported by  National Key R\&D Program of China  (2022YFA1005802),   NSFC (12071328,   12122113) and    BK20211595}

\email{lg@suda.edu.cn}
\email{zuxuetong@163.com}

\maketitle
\begin{abstract}

	In this paper,  we    establish  a new quasi-shadowing property for any  nonuiformly partially hyperbolic  set of  a $C^{1+\alpha}$  diffeomorphism,  which  is  adaptive to the  movement of the pseudo-orbit.  Moreover, the quasi-specification property and quasi-closing property  are  also investigated.   As an  application of quasi-closing property, we  extend Katok's  reslut on the growth of periodoc orbits  for hyperbolic ergodic measure  to any ergodic measure:  the number  of quasi-periodic  points grows exponentially  at least  the metric entropy.

\end{abstract}

\section{Introduction}

The mathematical models used in practical applications are often simplified compared to reality, yet still need to be effective,  thus such  properties that remain unchanged or can be approxiamted  to some extent under small perturbations attract  people usually.   In this paper, we are concerned with the approximation mechanism of pseudo-orbits by real orbits, i.e., the shadowing property of dynamical system.    The shadowing property is a poweful tool in the structrual anlysis  of differentiable dynamical system, see  Liao \cite{Lia},  Ma\~n\'e \cite{Man},  Wen \cite{Wen}  and  Gan  \cite{Gan}  for the investigation of stability,  and Bowen \cite{MR0262459},  Katok \cite{MR0573822} and Sarig \cite{Sar} for the investigation of complexity. 
\\

\noindent {\bf (1) Shadowing for uniformly hyperbolic systems.}  Throughout,  let $M$ be a compact  boundaryless  Riemannian manifold and,    $\|\cdot\|$ and   $d$ be the metric and norm induced by the  Riemannian metric.  Let $f$ be a diffeomorphism on $M$ and    $\Lambda$ be a   uniformly hyperbolic set:  there exist  an $Df$-invariant splitting $T_{\Lambda} M=E^s\oplus E^u$,   constants $C\ge 1$ and $0<\lambda <1$ such that 

\begin{itemize}
	\item[(i)]$
	\|D_xf^n(v)\|\le C\lambda^n\|v\|,\quad \forall\,x\in \Lambda,\,v\in E^s(x);$\\
	\item[(ii)]$	\|D_xf^{-n}(v)\|\le C\lambda^n\|v\|,\quad \forall\,x\in \Lambda,\,v\in E^u(x).$
\end{itemize}
For $ \delta>0 $, we refer to $ \{x_n\}_{n\in\mathbb{Z}} $ as a $ \delta $ pseudo-orbit if $ d(f(x_n),x_{n+1})<\delta,\;n\in \mathbb{Z} $.   It holds the following  shadowing property  (Anosov \cite{Ano}, Bowen \cite{Bow} and Sinai \cite{Sin}):   
For any $\vep>0$, there exists $ \delta>0 $ such that for any $ \delta $ pseudo-orbit $ \{x_n\}_{n\in\mathbb{Z}}\subset\Lambda$, there exists a point $ x $ such that
\begin{equation*}
d(x_n,f^n(x))<\varepsilon,\;n\in\mathbb{Z}.
\end{equation*}
Besides,   there exists $\vep_0>0$ such that if $\vep<\vep_0$, the shadowing point $x$ is unique.    By considering periodic  pseudo-orbit,  there is  closing property:
for any $ \varepsilon>0 $, there exists $ \beta>0 $  such that for any $ x $ with  $	d(x,f^p(x))<\beta$ for some $p\in \mathbb{Z}^+$, 
there exists a point $z$ with $f^p(z)=z$ such that
$$
d(x,z)<\varepsilon.$$

Gan \cite{MR1897871} proved a generalized shadowing lemma for a closed invariant set with a continuous invariant splitting.
Wen et al. \cite{MR2467027} proved that the chain component that contains a hyperbolic periodic point is hyperbolic if it is $ C^1 $-stably shadowable.

\noindent {\bf (2) Shadowing for nonuniformly hyperbolic systems.}     For nonuniformly hyperbolic systems,  the hyperbolicity is not uniform,  thus the scale of  shadowing property needs to be adjusted to the level of hyperbolicity. 

Let  $f$ be a $C^{1+\alpha}$ diffeomorphism on $M$.  	Given positive numbers $ \lambda,\;\mu $ and $\varepsilon $ with $ \lambda,\mu \gg \varepsilon $ and a positive integer $ k  $,  define $  \Lambda_k=\Lambda_k(\lambda,\mu,\varepsilon)  $ to be all points $ x\in M $ for which there is an $Df$-invariant decomposition $ T_{\orb(x)}M=E^s\oplus E^u $ satisfying

\begin{itemize}
	\item[(i)]$ \| Df^n(v)\|\le e^{\varepsilon k}e^{-(\lambda-\varepsilon)n}e^{\varepsilon|m|}\|v\|,\,\, \forall n\ge 0, m\in\mathbbm{Z}, \,v\in E^s({f^m(x)})$;\\
	\item[(ii)] $ \| Df^{-n}(v)\|\le e^{\varepsilon k}e^{-(\mu-\varepsilon)n}e^{\varepsilon|m|}\|v\|,\,\,\forall n\ge 0, m\in\mathbbm{Z},\,v\in  E^u(f^m(x))$.
\end{itemize}
We refer to $ \Lambda_{k} $ as a  hyperbolic block or   Pesin block \cite{Pes}.  It is veried  that $ \Lambda_k\subseteq\Lambda_{k+1} $ from the definition.    The hyperbolicity of $\Lambda_k$ is weaker as $k$ becomes larger.     Denote $\Lambda=\bigcup\limits_{k\ge 1}\Lambda_k(\lambda,\mu,\varepsilon)  $ and we call $ \Lambda $ a  nonuniformly hyperbolic set. 	One may see that if there is some ergodic hyperbolic measure, then we can get a  nonuniformly hyperbolic set of full measure by taking $\lambda$ as the absolute value of the maximal negative Lyapunov exponent and $\mu$ as the minimun  positive  Lyapunov exponent.

For a sequence of numbers  $ \{\delta_{k} \}_{k\in\mathbb{Z^+}}$, if there exist a sequence of positive integers $ \{s_n\}_{n\in\mathbb{Z}} $ and a sequence of points  $ \{x_n\}_{n\in\mathbb{Z}}\subset \Lambda $ such that

\begin{itemize}
	\item[(a)]$ x_n\in\Lambda_{s_n} $, $ \forall n\in\mathbb{Z} $;\\
	\item[(b)] $ |s_n-s_{n+1}|\le 1 $, $ \forall n\in\mathbb{Z} $;\\
	\item[(c)]$d(f(x_n),x_{n+1})< \delta_{s_n} , \forall n\in\mathbb{Z} $,
\end{itemize}
we refer to $ \{x_n\}_{n\in\mathbb{Z}}\subset \Lambda $ as a $ \{\delta_{k} \}_{k\in\mathbb{Z^+}}$ pseudo-orbit of $ f $.

Katok \cite{MR0573822} proved the following  shadowing property:  for any  $ \eta>0 $, there exists a sequence of positive numbers  $ \{\delta_k\}_{k\in\mathbb{Z^+}} $ such that for any $ \{\delta_{k} \}_{k\in\mathbb{Z^+}}$ pseudo-orbit $ \{x_n\}_{n\in\mathbb{Z}}\subset \Lambda $, there exists a point  $ x $ satisfying:
\begin{equation*}
d(x_n,f^n(x))<\eta \epsilon_{s_n},
\end{equation*}
where $ \epsilon_{s_n}=\epsilon_0e^{-s_n\cdot \frac{\varepsilon}{\alpha}} $ and $ \epsilon_0 $ is a number  independent of $ s_n $.
Besides,   if $\eta<1$, the shadowing point $x$ is unique. 
There is also  closing property   according to the scale of hyperbolic block $\Lambda_k$.   For each $k\in \mathbb{Z}^+$ and  any $ \eta>0 $,  
there exists a positive number $ \beta=\beta(k,\eta) $  such that  for any  point $ x\in\Lambda_k $ with  $ d(x, f^p(x))<\beta $, there exists a point $z$ with $f^p(z)=z$ 	such that:
$$ d(x,z)<\eta.$$

\noindent {\bf (3) Quasi-shadowing for partially  hyperbolic systems.} 

Comparing to hyperbolic systems, the partially hyperbolic systems  has additional central direction.  The shadowing property does not exist for all partially hyperbolic systems any more \cite{MR3007732}. Consequently, the research turns to  the concept of quasi-shadowing property and  sequences of shadowing points (not orbits in general) are allowed to make minor adjustments in the central direction.  

Let  $f$ be a diffeomorphism on $M$ and    $\Lambda$ be a   (uniformly) partially  hyperbolic set:  there exist  an $Df$-invariant splitting $T_{\Lambda} M=E^s\oplus E^c\oplus E^u$,   constants $C\ge 1$, $0<\lambda<1<\mu$ and $0<\lambda<\lambda'\le \mu'< \mu <1$ such that 

\begin{itemize}
	\item[(i)]$
	\|D_xf^n(v)|\le C\lambda^n\|v\|,\quad \forall\,x\in \Lambda,\,v\in E^s(x)$;\\
	\item[(ii)]	$\frac{1}{C}(\lambda')^n\|v\|\le \|D_xf^n(v)\|\le C(\mu')^n\|v\|,\quad \forall\,x\in \Lambda,\,v\in E^c(x)$;\\
	\item[(iii)]	$	\|D_xf^{-n}(v)|\le C\mu^{-n}\|v\|,\quad \forall\,x\in \Lambda,\,v\in E^u(x)$.
\end{itemize}
Take  a constant $\rho_0 > 0$ such that for any $x \in M$ the standard
exponential map $\exp_x: \{v \in  T_x M:  \|v\| < \rho_0\}\to M$  is a  $C^{\infty}$ diffeomorphism to its 
image.  Hu, Zhou and Zhu  \cite{MR3316919} proved that  on partially hyperbolic set, $f$  has the following quasi-shadowing property:  for any $\vep\in (0,\rho_0)$, there exists $\delta>0$  such
that for any $\delta$-pseudo orbit $\{x_n \}_{n\in \mathbb{Z}}$, there exist a sequence of points $\{y_n \}_{n\in\mathbb{Z}}$ and a
sequence of vectors $\{u_k\in E^c(x_k)\}_{n\in\mathbb{Z}}$  such that
$$d(x_n , y_n ) < \vep,\,\forall\,n\in\mathbb{Z},$$ 
where
$$y_n = \exp_{x_n}(u_n+\exp_{x_n}^{-1}(f(y_{n-1}))).$$
Besides,  $\{y_n \}_{n\in\mathbb{Z}}$  and $\{u_n \}_{n\in\mathbb{Z}}$ can be chosen uniquely so as to satisfy
$$y_n\in \exp_{x_n}(E^s(x_n)+E^u(x_n)).$$ 
Here $u_k\in E^c(x_k)$ is   taken  into account  as a motion along the center direction.

if for any pseudo orbit, there is a sequence of points $ \{y_k\}_{k\in\mathbb{Z}} $ tracing it, in which $ y_{k+1} $ is obtained from $ f(x_k) $ by a motion  along the center direction.
In \cite{LZ}, Li et al. studied the quasi-shadowing property for  partially hyperbolic flows.
In \cite{LZ1},  Li et al. proved the quasi-shadowing property for quasi-partially hyperbolic pseudo-orbit made by quasi-partially hyperbolic strings.\\

\noindent {\bf (4)Quasi-shadowing  for  nonuniformly  partially  hyperbolic systems.} 

Nonuniformly partially hyperbolic system is a generalization of  nonuniformly hyperbolic system  by taking into account the center boundle and also a generalization of uniformly partially hyperbolic system by modifying the uniformity.   Nonuniformly partially hyperbolic sets  always exist, as we will see.

Let  $ f$  be a $ C^{1+\alpha} $ diffeomorphism on $M$. 

\begin{Definition}\label{def1}
	Given numbers $ \lambda, \mu >0$,   $-\lambda<- \lambda'<\mu'<\mu $,  $\varepsilon >0$ with $\vep\ll \min\{ -\lambda, \mu, |\lambda-\lambda'|, |\mu'+\lambda'|, |\mu-\mu'| \}$,  and a positive integer $ k  $, we define $  \Lambda_k=\Lambda_k(\lambda,\mu,\lambda',\mu',\varepsilon)  $ to be all points $ x\in M $ for which there is a $Df$-invariant decomposition $ T_{\orb(x)}M=E^s\oplus E^c\oplus E^u $ along the orbit of $x$  satisfying
	
	\begin{itemize}
		\item[(i)]$ \| Df^n(v)\|\le e^{\varepsilon k}e^{-(\lambda-\varepsilon)n}e^{\varepsilon|m|}\|v\|,\,\, \forall n\ge 0, m\in\mathbbm{Z},\,\,v\in E^s({f^m(x)})$;\\
		\item[(ii)] $ \| Df^n(v)\|\le e^{\varepsilon k}e^{(\mu'+\varepsilon)n}e^{\varepsilon|m|}\|v\|$,\\
		$ \| Df^{-n}(v)\|\le e^{\varepsilon k}e^{(\lambda'+\varepsilon)n}e^{\varepsilon|m|}\|v\|,
		\,\,\forall n\ge 0,m\in\mathbbm{Z},\,\,v\in E^c({f^m(x)})$;\\
		\item[(iii)] $ \| Df^{-n}(v)\|\le e^{\varepsilon k}e^{-(\mu-\varepsilon)n}e^{\varepsilon|m|}\|v\|,\,\,\forall n\ge 0, m\in\mathbbm{Z},\,\, v\in E^u(f^m(x)) $.
	\end{itemize}
	
	We refer to $ \Lambda_{k} $ as a partially hyperbolic block or   Pesin block in nonuniformly partially hyperbolic sense.  Denote by $ \kappa=\kappa(x) $ the  minimum value of the $ k $ that satisfy the aforementioned conditions for $x$ and we refer to it as the index of  partial hyperbolicity.  Denote $\Lambda=\Lambda(\lambda,\mu,\lambda',\mu',\varepsilon)=\bigcup\limits_{k\ge 1}\Lambda_k(\lambda,\mu,\lambda',\mu',\varepsilon)  $ and we call $ \Lambda $ a nonuniformly partially hyperbolic set.
\end{Definition}

\begin{Remark} Nonuniformly  partially  hyperbolic set  always exist.   Given an ergodic measure which always exists,  we can get a  nonuniformly partially hyperbolic set of full measure by taking $\lambda$ as the absolute value of the maximal negative Lyapunov exponent,  $\mu$ as the minimun  positive  Lyapunov exponent and $\lambda'<\lambda$, $\mu'<\mu$.  Here $E^s$ or $E^u$ may be trivial if $\mu$ has no  negative Lyapunov exponent or positive   Lyapunov exponent. 
\end{Remark}

In this paper, we shall study  the quasi-shadowing property for  nonuniformly partially hyperbolic systems.     For any $x\in M$ and a positive integer $n$, denote the orbit segment $$[x, f^n(x)] :=\{x, f(x),\cdots, f^n(x)\}.$$   On  the orbit segment, there is no movement and it is real orbit.   We want to show  that   if it is an orbit segment in the pseudo-orbit, it is shadowed by an orbit segment in the shadowing orbit.

There exists $ \{\delta_{k} \}_{k\in\mathbb{Z^+}}$ such that for a   $ \{\delta_{k} \}_{k\in\mathbb{Z^+}}$ pseudo-orbit $\{z_i\}$,	 denoting $\tau(z_i)=s_i$, there exist fake  stable foliation $ \mathcal{W}^s $,  fake center foliation $ \mathcal{W}^{c} $ and fake  unstable foliation $ \mathcal{W}^u $  inside  $ B(z_i, \epsilon_0e^{-\tau(z_i)\frac{\varepsilon}{\alpha} }) $,  where $\epsilon_0$ is  independent of  $\tau(z_i)$.  We refer to Section \ref{Uniform partial hyperbolicity} for more details on the fake  foliations.

We have the following quasi-shadowing property  adaptive to the  movement of the pseudo-orbit, i.e., the shadowing orbit  has movement  only if  the pseudo-orbit has movement. 

\begin{Main}\label{thA}
	Let $ f$  be a  $ C^{1+\alpha} $ diffeomorphism on  $M$ and   $ \Lambda=\bigcup\limits_{k\ge 1}\Lambda_k(\lambda,\mu,\lambda',\mu',\varepsilon)  $ be    a nonuniformly partially hyperbolic set. Then $ f $ has the quasi-shadowing property for $\Lambda$ in the following sense:  for any $ \eta>0 $ there exists  a  sequence of positive numbers $ \{\delta_k\}_{k\in\mathbb{Z}^+}  $  such that for any $  \{\delta_{k} \}_{k\in\mathbb{Z^+}}$ pseudo-orbit   $ \{z_i\}= \{\cdots, [x_{-n}, f^{a_{-n}-1}(x_{-n})], $ $  \cdots, [x_0, f^{a_0-1}(x_0)], \cdots, [x_{n},  f^{a_n-1}(x_{n})], \cdots \}  \subset  \Lambda $,  there exists a sequence of points $ \{y_n\}_{n\in\mathbb{Z}} $ satisfying:
	
	\begin{itemize}
		\item[(i)]$ f^{a_n}(y_n)\in  \mathcal{W}^c(y_{n+1}),\,  n\in \mathbb{Z} $, where $ \mathcal{W}^c $ is the fake center foliation of $ f $;\\
		\item[(ii)]$ d(f^{i}(x_{n}),f^{i}(y_{n})) < \eta\epsilon_{\tau(x_n)},\;0\le i\le a_n,\,\, n\in\mathbb{Z}. $
	\end{itemize}	
\end{Main}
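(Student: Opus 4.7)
My approach combines Katok's Pesin/Lyapunov-chart machinery behind the nonuniform shadowing lemma \cite{MR0573822} with the fake-foliation and graph-transform argument of Hu--Zhou--Zhu \cite{MR3316919} for uniform quasi-shadowing. The decisive observation is that the only degrees of freedom in a shadow of the form required by Theorem~A are the base points $\{y_n\}_{n\in\mathbb Z}$ of the segments: once each $y_n$ is fixed, the segment $y_n,f(y_n),\dots,f^{a_n-1}(y_n)$ is a genuine $f$-orbit, and the tracking estimate (ii) follows from a Pesin-chart bound on $Df^i$, $0\le i< a_n$, applied to a sufficiently small displacement $y_n-x_n$. The nontrivial constraint is condition (i), which ties $y_{n+1}$ to $f^{a_n}(y_n)$ through the fake center foliation.

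\textbf{Reduction to a contraction.} At each pseudo-orbit point $z_i$ I would install a Lyapunov chart of radius $\epsilon_{\tau(z_i)}=\epsilon_0 e^{-\tau(z_i)\varepsilon/\alpha}$, inside which $f$ is $\alpha$-H\"older close to the model linear skew product prescribed by $\lambda,\lambda',\mu',\mu$, and the fake foliations $\mathcal W^s,\mathcal W^c,\mathcal W^u$ supply a local product structure. With $\{\delta_k\}$ chosen small enough, each jump $f(z_i)\mapsto z_{i+1}$ lies well inside the chart at $z_{i+1}$ and the chart-change maps are uniformly Lipschitz. Encoding $y_n$ by chart coordinates $(\xi^s_n,\xi^c_n,\xi^u_n)\in E^s(x_n)\oplus E^c(x_n)\oplus E^u(x_n)$, condition (i) translates into equality of the $(s,u)$-components of $f^{a_n}(y_n)$ and of $y_{n+1}$ in the chart at $x_{n+1}$, with the $c$-components left free. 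This yields an operator $T$ on coordinate sequences whose $s$-part contracts under forward iteration by factor $e^{-(\lambda-\varepsilon)a_n}$, whose $u$-part contracts under backward iteration by factor $e^{-(\mu-\varepsilon)a_n}$, and whose $c$-part is readjusted after each segment to enforce the center-leaf relation. A Banach fixed-point argument on sequences in a weighted $\ell^\infty$-norm such as $\|(\xi_n)\|=\sup_{n\in\mathbb Z}e^{-\varepsilon|n|}\epsilon_{\tau(x_n)}^{-1}|\xi_n|$ then produces the unique $\{y_n\}$, and condition (ii) is read off from the construction.

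\textbf{Main obstacle.} The delicate point is balancing three competing scales: the center may expand across a single segment by up to $e^{(\mu'+\varepsilon)a_n}$, the admissible chart radius at $x_{n+1}$ may be as small as $\epsilon_0 e^{-\tau(x_{n+1})\varepsilon/\alpha}$, and the Pesin index $\tau$ may fluctuate by $\pm 1$ per pseudo-orbit step, hence by $\pm a_n$ across a single segment. I would handle this by choosing $\{\delta_k\}$ to decrease geometrically in $k$, first proving a one-step version of the theorem (all $a_n=1$, which is essentially the Pesin analogue of \cite{MR3316919}), and then iterating the one-step estimate along each segment, absorbing the center amplification and the Pesin factors $e^{\varepsilon k}e^{\varepsilon|m|}$ into a single geometric series via the spectral-gap smallness of $\varepsilon$ built into Definition~\ref{def1}. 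Verifying that the fixed point $y_n$ stays inside the chart at $x_n$ for \emph{every} $n\in\mathbb Z$, so that the two-sided Pesin bookkeeping closes up, is the technical heart of the proof.
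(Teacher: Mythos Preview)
Your plan follows the Perron/implicit-function route (a Banach fixed point on a weighted sequence space, in the spirit of Hu--Zhou--Zhu), whereas the paper takes the Hadamard/geometric route. Concretely, the paper builds nested families of fake-foliation cylinders $C^{cu}(z_i,\cdot)$ and $C^{cs}(z_i,\cdot)$, shows that $\tilde f$ carries each $C^{cu}$ forward into the next and each $C^{cs}$ backward into the previous, and then forms $C^{cu}_\infty(z_i)=\bigcap_{j\ge 0}\tilde f^{\,j}C^{cu}(z_{i-j},\cdot)$ and $C^{cs}_\infty(z_i)=\bigcap_{j\ge 0}\tilde f^{-j}C^{cs}(z_{i+j},\cdot)$. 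A chosen $cu$-leaf $F_{z_i}\subset C^{cu}_\infty$ and $cs$-leaf $G_{z_i}\subset C^{cs}_\infty$ intersect in a single fake center leaf, and the shadowing point is $t(z_i)=G_{z_i}\cap\mathcal W^u_{z_i}(p(z_i))$ with $p(z_i)=F_{z_i}\cap\mathcal W^s_{z_i}(z_i)$. The tracking estimate (ii) then comes from exponential contraction along $\mathcal W^s$ forward from $x_n$, along $\mathcal W^u$ backward from $f^{a_n-1}(x_n)$, and H\"older regularity of the fake foliations; no contraction-mapping theorem is invoked.

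The geometric construction has one concrete advantage over your outline: since $t(z_i)$ is reached from $z_i$ by moving first along $\mathcal W^s_{z_i}$ and then along $\mathcal W^u_{z_i}$, the displacement $t(z_i)-z_i$ carries \emph{no center component} at any step. This kills your ``main obstacle'' outright: there is no $e^{(\mu'+\varepsilon)a_n}$ center growth to absorb along a segment, and the intra-segment bound reduces to a stable estimate from the left endpoint plus an unstable estimate from the right endpoint. In your scheme this corresponds to imposing $\xi^c_n=0$ (the Hu--Zhou--Zhu normalization $y_n\in\exp_{x_n}(E^s\oplus E^u)$), not leaving the $c$-component ``free''; as written your description is ambiguous here, and your proposal to reach the general case by ``iterating the one-step estimate along each segment'' risks inserting a center adjustment at every step, which would destroy the adaptive conclusion (i). If you fix $\xi^c_n=0$ and run the fixed point on $(\xi^s,\xi^u)$ alone with step lengths $a_n$, your approach should go through and would additionally give uniqueness of $\{y_n\}$, which the paper's argument does not explicitly extract.
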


\vspace*{10pt}
\begin{figure}[H]
	\begin{center}
		\includegraphics[width=0.85\textwidth]{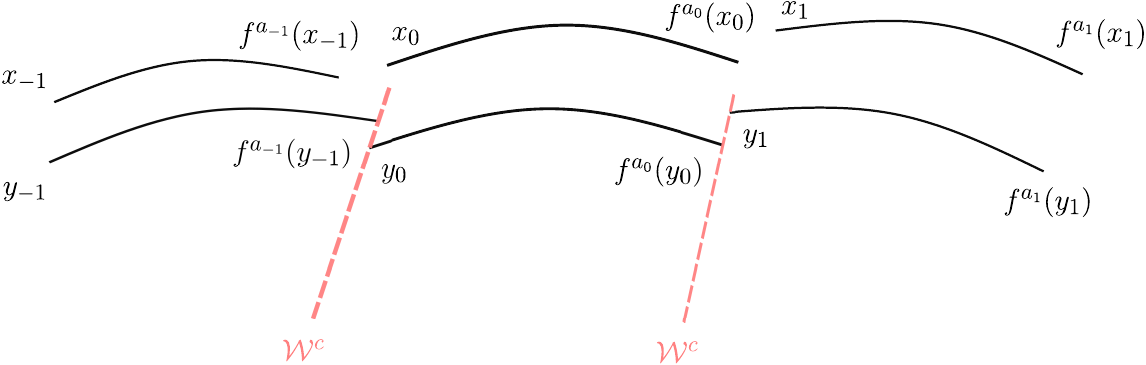}
	\end{center}
	\caption{Quasi-shadowing}
\end{figure}
\vspace*{10pt}

In the category of invariant sets, periodic orbits are important invariant sets, and in the context of invariant measures, periodic measures are significant measures. Regarding periodic points, many scholars have already derived some results. Sigmund \cite{Sig} proved that in nuniformly hyperbolic systems, periodic measures are dense in the set of invariant measure. Hirayama \cite{MR1974422}  proved that in nonuniformly hyperbolic systems, periodic measures are dense in the set of invariant measures supported on a total measure set with respect to a hyperbolic mixing measure.  Liang et al \cite{MR2457408} replaced the assumption of hyperbolic mixing measure by a more natural and weaker assumption of hyperbolic ergodic measure and generalized Hirayama's result. Indeed, many properties of hyperbolic measures can be well
approximated by periodic measures.

For $ n,p >0 $, denote by $ P'_{n,p}(f)=\{y:\; $ there exist a sequence of number $ \{n_i\}_{i=0}^{p-1} $ satisfying $ \sum\limits_{i=0}^{p-1}n_i=n $ and  a sequence of point $ \{z_i\}_{i=1}^{p-1} $ such that $ z_1\in\mathcal{W}^c(f^{n_0}(x)) $, $ z_{i+1}\in\mathcal{W}^c(f^{n_i}(z_i)),i=1,\cdots , p-2$, $ x\in\mathcal{W}^c(f^{n_{p-1}}(z_{p-1}))\} $, where $ \mathcal{W}^c $ is the fake center  foliation of $ f $.  We refer to $ P'_{n,p}(f) $ as the $ p $ quasi-periodic points of $ f $ with period $ n $.

We have the following quasi-specification property concerning periodic shadowing.

\begin{Main}\label{thD}
	Let $ f:M\to M $ be a  $ C^{1+\alpha} $ diffeomorphism on a boundless compact smooth Riemannian manifold and there are  a nonuniformly partially hyperbolic set $ \Lambda=\bigcup\limits_{k\ge 1}\Lambda_k(\lambda,\mu,\lambda',\mu',\varepsilon)  $. Let  $ m $ be a Borel probability $ f$-invariant  ergodic measure and denote $ \tilde{\Lambda}_k$ as the support of $\mu$ restricted on  $\Lambda_k$ and $ \tilde{\Lambda}=\bigcup\limits_{k\ge1}\tilde{\Lambda}_k $. Then $ f $ has the quasi-specification property for $\tilde{\Lambda} $ in the following sense:  for any $ \eta>0 $, there exists a set of  positive integers  $\{M_{k,t}=M_{k,t}(\eta)\}_{k,t\in \mathbb{Z}^+}$ such that for   any sequence of orbit segment $[x_1, f^{a_1}(x_1)], \cdots, [x_l, f^{a_l}(x_l)]$  with $x_i, f^{a_i}(x_i)\in \tilde{\Lambda}_{k_i}$ for  $1\le i\le l$, there exist  two sequences of points  $ \{y_i\}_{i=i}^{l} $ and  $ \{\hat{y}_i\}_{i=1}^{l} $, and a sequence of positive integers $ \{X_{1, 2},\cdots, X_{l-1, l}, X_{l, 1}\} $ with $X_{i, i+1}\le M_{k_i, k_{i+1}}$, $1\le i\le l-1$, and $X_{l,1}\le M_{k_l, k_1}$  satisfying:
	
	\begin{itemize}
		
		\item[(i)]
		$f^{a_i}(y_i) \in\mathcal{W}^c( \hat{y}_i) $, 
		$ f^{X_{i, i+1}}(\hat{y}_i)  \in\mathcal{W}^c (y_{i+1}),\; i=1, \cdots, l-1 $,  $f^{a_l}(y_l) \in\mathcal{W}^c( \hat{y}_l) $,   and  $f^{X_{l, 1}}(\hat{y}_l)  \in\mathcal{W}^c (y_{1}) $, where $ \mathcal{W}^c $ is the fake center  foliation of $ f $, i.e.,    
		$ y_1 $ is a $ 2l $ quasi-periodic point of $ f $;\\

		\item[(ii)] $ d(f^{j}(y_i),f^j(x_i))\le \eta  \epsilon_{k_i},\;\; 0\le j\le a_i, \,\,i=1,\cdots,l. $
	\end{itemize}
	
\end{Main}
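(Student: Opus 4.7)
The plan is to deduce Theorem B from Theorem A by constructing a periodic pseudo-orbit that concatenates the given orbit segments with short connecting orbit pieces furnished by ergodicity. Fix $\eta>0$ and let $\{\delta_k\}_{k\ge 1}$ be the sequence produced by Theorem A for this $\eta$. For each pair $(k,t)\in\mathbb{Z}^+\times\mathbb{Z}^+$, cover the compact set $\tilde{\Lambda}_k$ by finitely many open balls $U_1,\ldots,U_{N_k}$ of radius on the order of $\delta_k$ centered at points of $\tilde{\Lambda}_k$, and similarly cover $\tilde{\Lambda}_t$ by $V_1,\ldots,V_{N_t}$. Each $U_i$ meets $\operatorname{supp}(m|_{\Lambda_k})$, so $m(U_i\cap\tilde{\Lambda}_k)>0$; likewise $m(V_j\cap\tilde{\Lambda}_t)>0$. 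Ergodicity of $m$ gives $m\bigl(\bigcup_{n\ge 1}f^{-n}(V_j)\bigr)=1$, so for each pair $(i,j)$ there exist $n(i,j)\ge 1$ and $q_{i,j}\in U_i\cap\tilde{\Lambda}_k$ with $f^{n(i,j)}(q_{i,j})\in V_j\cap\tilde{\Lambda}_t$. Set $M_{k,t}:=\max_{i,j}n(i,j)$; this depends only on $\eta$, $k$, $t$.

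\textbf{Periodic pseudo-orbit and application of Theorem A.} Given the input segments $[x_i,f^{a_i}(x_i)]$ with endpoints in $\tilde{\Lambda}_{k_i}$, for each $i$ (indices taken mod $l$) pick the ball $U(i)$ in the cover of $\tilde{\Lambda}_{k_i}$ that contains $f^{a_i}(x_i)$ and the ball $V(i)$ in the cover of $\tilde{\Lambda}_{k_{i+1}}$ that contains $x_{i+1}$, and let $q_{i,i+1}$ and $X_{i,i+1}:=n(U(i),V(i))\le M_{k_i,k_{i+1}}$ be the corresponding connecting point and transition time. The resulting periodic sequence
\[
\cdots,\; x_i,f(x_i),\ldots,f^{a_i-1}(x_i),\; q_{i,i+1},\ldots,f^{X_{i,i+1}-1}(q_{i,i+1}),\; x_{i+1},\cdots
\]
has period $T=\sum_{i=1}^{l}(a_i+X_{i,i+1})$. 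Assign Pesin indices $s_n$ consistently by anchoring at the distinguished points, which lie in the known blocks $\Lambda_{k_i}$ and $\Lambda_{k_{i+1}}$, and interpolating via the one-step bound $\kappa(f(\cdot))\le\kappa(\cdot)+1$ together with the monotonicity $\Lambda_j\subset\Lambda_{j+1}$. Provided the radii in the previous step are chosen small enough relative to $\delta_{s_n}$ for the largest index appearing along the pseudo-orbit (this may only enlarge $M_{k,t}$), the transition jumps satisfy the required $\delta_{s_n}$-bound, so the sequence is a genuine $\{\delta_k\}$-pseudo-orbit contained in $\Lambda$.

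\textbf{Extracting the quasi-periodic structure and main obstacle.} Applying Theorem A to this periodic pseudo-orbit produces a bi-infinite sequence of shadowing points, in which the point shadowing the start of the $i$-th real segment is labeled $y_i$ and the point shadowing the start of the $i$-th connecting segment is labeled $\hat{y}_i$. The center-leaf adjacency supplied by Theorem A becomes
\[
f^{a_i}(y_i)\in\mathcal{W}^c(\hat{y}_i),\quad f^{X_{i,i+1}}(\hat{y}_i)\in\mathcal{W}^c(y_{i+1}),\quad f^{X_{l,1}}(\hat{y}_l)\in\mathcal{W}^c(y_1),
\]
so $y_1$ is a $2l$ quasi-periodic point of $f$ of period $T$; the metric estimate from Theorem A restricted to the real segments yields (ii) with $\tau(x_i)=k_i$. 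The chief difficulty is the circular coupling in the previous paragraph between the ergodically chosen bound $M_{k,t}$ and the index-dependent tolerance $\delta_{s_n}$: since $s_n$ can drift along a connecting segment of length up to $M_{k,t}$, $\delta_{s_n}$ may shrink and tighten the radius required, which may in turn enlarge $M_{k,t}$. Breaking this loop requires a priori control on the growth of the index $\kappa$ over segments of bounded length, supplied by the one-step growth of $\kappa$ under $f$ and the monotonicity of the Pesin blocks.
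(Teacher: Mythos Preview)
Your approach is essentially the same as the paper's: build a periodic $\{\delta_k\}$-pseudo-orbit by inserting connecting orbit pieces (whose existence follows from ergodicity) between the given segments, then invoke Theorem~A. The paper is terser---it simply cites the construction in \cite{LST} for the transition bounds $M_{k,t}$ and the connecting points $z_i\in B(f^{a_i}(x_i),\delta_{k_i})\cap\Lambda_{k_i}$ with $f^{X_{i,i+1}}(z_i)\in B(x_{i+1},\delta_{k_{i+1}})\cap\Lambda_{k_{i+1}}$---whereas you spell out a direct covering-plus-ergodicity argument to the same effect.

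Your ``chief difficulty'' about circular coupling, however, is not a genuine obstacle. The key observation you are missing is that condition~(c) in the definition of a $\{\delta_k\}$-pseudo-orbit is only nontrivial at the $2l$ junction points; along both the given segments $[x_i,f^{a_i-1}(x_i)]$ and the connecting segments $[q_{i,i+1},f^{X_{i,i+1}-1}(q_{i,i+1})]$ the points form true orbit pieces, so $d(f(x_n),x_{n+1})=0<\delta_{s_n}$ regardless of how large $s_n$ drifts. At each junction the index $s_n$ is anchored by the endpoints, which lie in $\Lambda_{k_i}$ or $\Lambda_{k_{i+1}}$ (hence the adjacent points lie in $\Lambda_{k_i+1}$ or $\Lambda_{k_{i+1}+1}$). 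Thus the radius needed for the covering balls is governed by $\delta_{k+1}$ and $\delta_{t+1}$ alone, and $M_{k,t}$ depends only on $(k,t,\eta)$---there is no feedback from the lengths $X_{i,i+1}$ or $a_i$ back into the required radii. Once you see this, the index assignment $s_n=\min(k_i+n,\,k_{i+1}+X_{i,i+1}-n)$ along the connector (and analogously along the real segments) satisfies (a), (b) automatically, and your proof goes through without the caveat in your last paragraph.
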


\vspace*{10pt}
\begin{figure}[H]
	\begin{center}
		\includegraphics[width=0.85\textwidth]{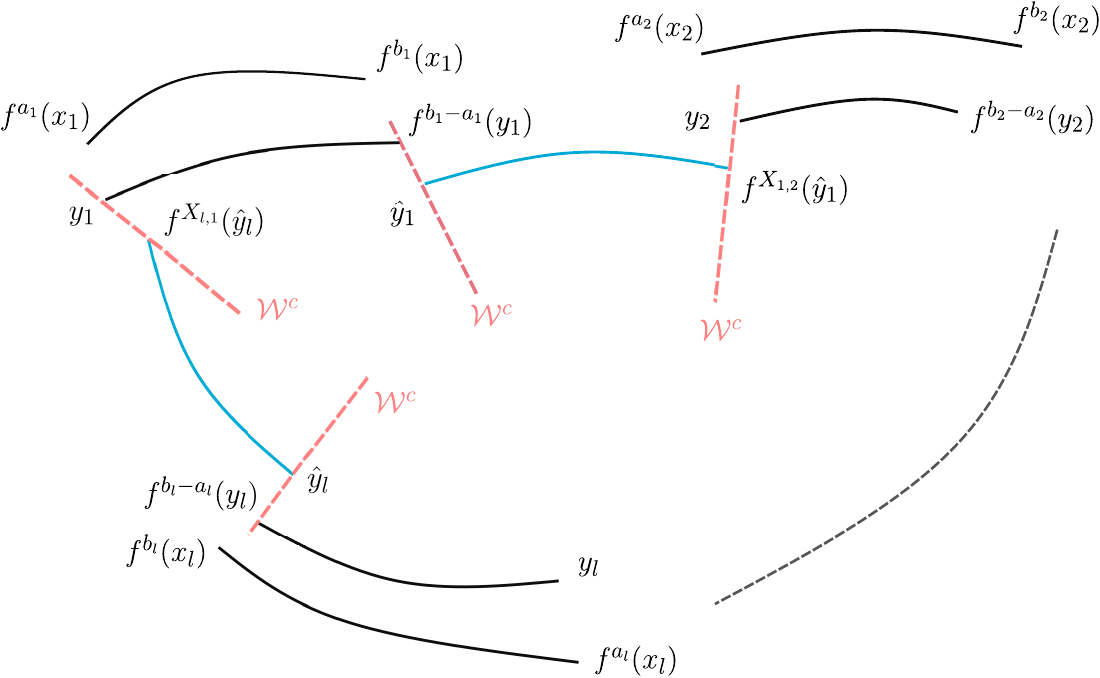}
	\end{center}
	\caption{Quasi-specification}
\end{figure}
\vspace*{10pt}

The above quasi-specification property can be derived from the quasi-shadowing property as follows. 

For any $ \eta>0 $ and $k\ge 1 $, take  $\delta_k$  as in Theorem  \ref{thA}.   Let $ \{M_{k,l}=M_{k,l}(\eta)\}_{k,l\in \mathbb{Z}^+}$ be as the constrction in \cite{LST}  (Page 567). 
For  each $1\le i\le l$,   there exist $z_i\in B(f^{a_i}(x_i), \delta_{k_i})\cap \Lambda_{k_i}$ and a sequence of positive integers $ \{X_{1, 2},\cdots, X_{l-1, l}, X_{l, 1}\} $ with  $X_{i,i+1}\le   M_{k_i, k_{i+1}}$,   $1\le i\le l-1$,  $X_{l,1}\le  M_{k_l, k_1}$  satisfying \begin{eqnarray*} &&f^{X_{i,i+1}}(z_i)\in B(x_{i+1}, \delta_{k_{i+1}})\cap \Lambda_{k_{i+1}},\quad 1\le i\le l-1,\\[2mm] && f^{X_{l,1}}(z_l)\in B(x_1, \delta_{k_1})\cap \Lambda_{k_1},\end{eqnarray*}
and
\begin{eqnarray*}
	&\big{\{}& x_1,\cdots,f^{a_1-1}(x_1), z_1,\cdots,f^{X_{1, 2}-1}(z_1),
	x_2,\cdots,f^{a_2-1}(x_2),
	z_2,\cdots,f^{X_{2,3}-1}(z_2),
	\\[2mm]
	&& \cdots, x_l,\cdots,f^{a_l-1}(x_l), z_l,\cdots,f^{X_{l, 1}-1}(z_l)
	\big{\}}^{\infty}
\end{eqnarray*}
is a   periodic  $\{\delta_k\}_{k\in\mathbb{Z}^+} $-pseudo-orbit.   Theorem \ref{thD} is obtained by applying Theorem  \ref{thA}.

The statement of the quasi-closing property is as follows:

\begin{Main}\label{thB}
	Let $ f:M\to M $ be a  $ C^{1+\alpha} $ diffeomorphism on a boundless compact smooth Riemannian manifold $M$ and there is  a nonuniformly partially hyperbolic set $ \Lambda=\bigcup\limits_{k\ge 1}\Lambda_k(\lambda,\mu,\lambda',\mu',\varepsilon)  $. Then $ f $ has the quasi-closing property for $\Lambda$ in the following sense:  for any $ \eta>0 $ and $k\ge 1 $ there exists  $ \beta=\beta(k,\eta)>0 $  such that  for any point $ x\in \Lambda_k $ if for some  $ p\in\mathbb{N} $ one has 
	\begin{equation*}
	f^p(x)\in \Lambda_k
	\end{equation*}
	and
	\begin{equation*}
	d(x,f^p(x))<\beta,
	\end{equation*}
	then there exists $y\in M$  such that:
	\begin{itemize}
		\item[(i)]$ f^{p}(y)\in\mathcal{W}^c(y)$, where $ \mathcal{W}^c $ is the fake center  foliation of $ f $;
		\item[(ii)]$ d(f^i(x), f^i(y))<\eta\epsilon_k ,\; i=0,1,\cdots,p-1  $.
	\end{itemize}
	
\end{Main}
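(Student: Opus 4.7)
The plan is to deduce Theorem~\ref{thB} from Theorem~\ref{thA} by applying the quasi-shadowing property to a periodic pseudo-orbit built from the near-closing orbit segment $[x, f^{p-1}(x)]$, and then extracting a shift-invariant (hence quasi-periodic) shadowing point.

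\textbf{Construction of the periodic pseudo-orbit.}  Given $x, f^p(x) \in \Lambda_k$ with $d(x, f^p(x)) < \beta$, set $x_n := x$ and $a_n := p$ for every $n \in \mathbb{Z}$; the concatenation of the orbit segments $[x_n, f^{a_n-1}(x_n)]$ produces the $p$-periodic sequence $\{z_i\}_{i\in\mathbb{Z}}$ with $z_{qp+j} = f^j(x)$.  Define the Pesin indices by $\tau(z_{qp+j}) := k + \min(j, p-j)$.  Since $x \in \Lambda_k$ forces $f^j(x) \in \Lambda_{k+j}$, while $f^p(x) \in \Lambda_k$ forces $f^j(x) = f^{-(p-j)}(f^p(x)) \in \Lambda_{k+(p-j)}$, one obtains $z_{qp+j} \in \Lambda_{\tau(z_{qp+j})}$, and consecutive indices differ by at most one.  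Within each orbit segment the pseudo-orbit condition is satisfied exactly, and the only nonzero transition has size $d(f^p(x), x) < \beta$ at a point of index $k+1$.  Choosing $\beta = \beta(k, \eta) < \delta_{k+1}$, where $\{\delta_j\}$ is the sequence furnished by Theorem~\ref{thA} for the tolerance $\eta$, the sequence $\{z_i\}$ becomes a legitimate $\{\delta_j\}_{j\in\mathbb{Z}^+}$-pseudo-orbit in $\Lambda$; this bound on $\beta$ is independent of $p$.

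\textbf{Quasi-shadowing and periodicity.}  Theorem~\ref{thA} produces $\{y_n\}_{n \in \mathbb{Z}}$ satisfying $f^p(y_n) \in \mathcal{W}^c(y_{n+1})$ and $d(f^i(x), f^i(y_n)) < \eta\epsilon_{\tau(x_n)} = \eta\epsilon_k$ for $0 \le i \le p$.  The pseudo-orbit is invariant under the shift $n \mapsto n+1$ because $x_n$, $a_n$ and $\tau(x_n)$ are constant in $n$; consequently, by uniqueness of the fixed point in the graph-transform construction underlying Theorem~\ref{thA}, the sequence $\{y_n\}$ inherits this shift-invariance, i.e.\ $y_{n+1} = y_n$ for every $n$.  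Setting $y := y_0$, condition~(i) reads $f^p(y) \in \mathcal{W}^c(y_1) = \mathcal{W}^c(y)$, and condition~(ii) follows directly from the tracking estimate restricted to $0 \le i \le p-1$.

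\textbf{Main obstacle.}  The decisive step is the shift-equivariance of the shadowing sequence, which requires a genuine uniqueness statement for Theorem~\ref{thA} under the natural gauge condition that $y_n$ lies on the local fake $\mathcal{W}^s \oplus \mathcal{W}^u$-leaf through $z_{np}$ (so that all center drift is absorbed in the $\mathcal{W}^c$-motion of condition~(i)).  Establishing this uniqueness reduces to verifying that the graph transform at the heart of Theorem~\ref{thA} is a strict contraction on the corresponding space of admissible sections; the contraction rates are provided by (i) and (iii) of Definition~\ref{def1}, but the estimates must be tracked along the variable Pesin index $\tau(z_i) = k + \min(j, p-j)$ so that the shrinking radii $\epsilon_0 e^{-\tau(z_i)\varepsilon/\alpha}$ of the fake foliations still accommodate each intermediate correction, and so that the calibration $\beta < \delta_{k+1}$ is compatible with the worst-case foliation radius at the midpoint of the segment.
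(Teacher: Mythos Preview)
Your approach is essentially the paper's: form the periodic pseudo-orbit $\{x,f(x),\dots,f^{p-1}(x)\}^\infty$, invoke Theorem~\ref{thA}, and read off the quasi-periodic point. The paper's entire argument is two sentences---it sets $\beta(k,\eta)=\delta_k$ and applies Theorem~\ref{thA} to this periodic pseudo-orbit---so you are following the same route, only with more care about the Pesin-index bookkeeping (your choice $\tau(z_{qp+j})=k+\min(j,p-j)$ and $\beta<\delta_{k+1}$ is the honest version of what the paper writes).

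Where you and the paper differ is in how much is said about why the shadowing sequence is itself periodic. The paper says nothing; you flag it as the ``main obstacle'' and propose to extract it from a uniqueness statement for the graph transform. That diagnosis is correct, but the resolution is simpler than your last paragraph suggests. In the proof of Theorem~\ref{thA} the shadowing point $y_n$ is $t(x_n)$, built from the leaves $F_{z_0}\subset C^{cu}_\infty(z_0)$ and $G_{z_0}\subset C^{cs}_\infty(z_0)$. The nested intersection $C^{cu}_\infty(z_0)=\bigcap_{j\ge0}\tilde f^{\,j}(C^{cu}(z_{-j},\gamma e^{-\theta^{-1}\tau(z_{-j})\epsilon}))$ already collapses to a \emph{single} $cu$-leaf, because each application of $\tilde f$ contracts the $s$-width by $e^{-\lambda_3}$ while the allowed radius grows by at most $e^{\theta^{-1}\epsilon}$, and $\lambda_3>\theta^{-1}\epsilon$. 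Likewise $C^{cs}_\infty(z_0)$ is a single $cs$-leaf. Hence $F_{z_0},G_{z_0}$ and therefore $t(z_i)$ are determined by the pseudo-orbit alone; since the pseudo-orbit is $p$-periodic, $t(z_{i+p})=t(z_i)$ and $y_{n+1}=y_n$ automatically. You do not need to introduce a separate gauge condition on $\mathcal W^s\oplus\mathcal W^u$, nor worry about the midpoint radius: the contraction estimate above is exactly what makes the calibration $\beta<\delta_{k+1}$ sufficient uniformly in $p$.
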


For any $ \eta>0 $ and $k\ge 1 $, take  $\delta_k$  as in Theorem  \ref{thA} and let $\beta(k,\eta)=\delta_k$. Then Theorem \ref{thB} is obtained by taking  periodic  pseudo-orbit 
$$ \big{\{} x, f(x), \cdots, f^{p-1}(x) \big{\}}^{\infty}. $$

\vspace*{10pt}
\begin{figure}[H]
	\begin{center}
		\includegraphics[width=0.45\textwidth]{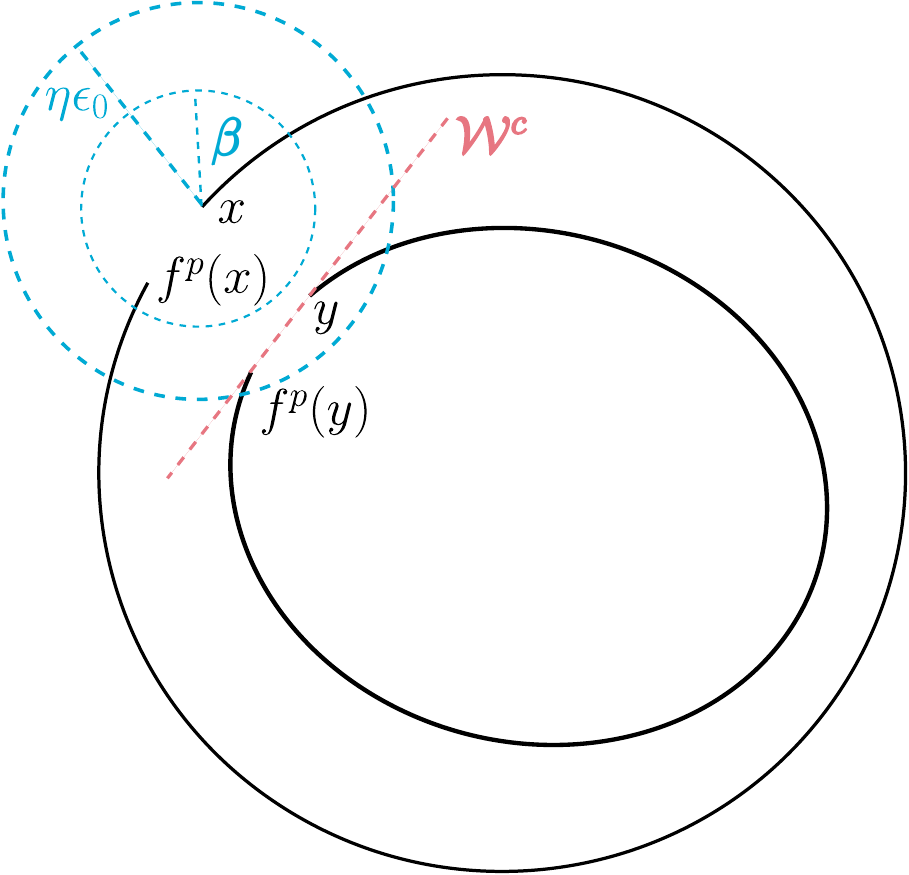}
	\end{center}
	\caption{Quasi-closing}
\end{figure}
\vspace*{10pt}

Entropy is a concept widely applied across various fields including physics, information theory, statistics, and mathematics. Initially introduced by physicist  Clausius in the mid-19th century, it describes the degree of disorder in thermodynamic systems. In different disciplines, entropy has various meanings and applications. In the theory of dynamical systems, entropy serves as a measure of the complexity of the system's dynamics. The larger the entropy of a dynamical system is, the less predictable the system is. Topological entropy and metric entropy are two distinct types of entropy. The concept of metric entropy was introduced by Kolmogorov in 1958 and is closely related to entropy in information theory, while topological entropy was introduced by Adler, Konheim, and McAndrew in 1965. In certain cases, metric entropy and topological entropy can be converted or complement each other, providing us with different perspectives to understand and analyze the complexity of dynamical systems. Interestingly, topological entropy and metric entropy are related to the number of periodic points.

Denote by $ \#P_n(f) $ the number of periodic points with period $ n $. Katok \cite{MR0573822} uesd the closing property proved an estimation of asympotic growth of the numbers of $ \#P_n(f) $ by the metric entropy:
\begin{equation*}
\varlimsup_{n\to \infty}\frac{\ln \#P_n(f)}{n}\ge h_m(f),
\end{equation*}
where $ m $ is a Borel probability $ f $-invariant measure with non-zero Lyapunov exponents and  $ \# A $  is  the cardinality of a set $ A$.   There have been someone  who have demonstrated the relationship between the topological entropy and the exponential growth rate of periodic points. For any uniformly hyperbolic system, Bowen \cite{MR0262459} proved that the asymptotical exponential growth of the number of periodic points was determined by the topological entropy $h_{\top}$:
\begin{equation*}
\varlimsup_{n\to \infty}\frac{\ln \#P_n(f)}{n}= h_{\top} (f).
\end{equation*}

As the application of Theorem \ref{thB}, we extend Katok's result of hyperbolic ergodic measure to any ergodic measure. 

Recall that  $$P'_{n,1}(f)=\{x: f^n(x)\in \mathcal{W}^c(x)\}.$$
and  let 
$$P'_{n,1}(f,\vep)=\max\# E$$
with $E\subseteq P'_{n,1}(f)$ satisfying:  $$\,\forall\,x\neq y\in E:\,\,\exists i\in[0,n),  d(f^i(x), f^i(y))>\vep.$$

\begin{Main}\label{thC}
	Let  $ f:M\to M $ be a nonuniformly partially hyperbolic $ C^{1+\alpha} $ diffeomorphism on a boundaryless compact smooth Riemannian manifold and there is  a nonuniformly partially hyperbolic set $ \Lambda=\bigcup\limits_{k\ge 1}\Lambda_k(\lambda,\mu,\lambda',\mu',\varepsilon)  $. Let  $ m $ be an ergodic  $ f$-invariant  measure with full measure for $\Lambda$.  Then	
	
	\begin{equation*}
	\lim_{\vep\to 0}	\varlimsup_{n\to \infty}\frac{\ln P'_{n,1}(f,\vep)}{n}\ge h_m(f).
	\end{equation*}
\end{Main}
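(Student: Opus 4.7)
The approach is to follow Katok's classical argument for periodic points, with the quasi-closing property (Theorem~\ref{thB}) replacing the usual closing lemma and $P'_{n,1}(f)$ substituting for the set of periodic points. Fix $\vep_0>0$ (the target separation scale) and $\gamma\in(0,h_m(f))$. Because $m$ is ergodic with $m(\Lambda)=1$, we choose $k\in\mathbb{Z}^+$ large enough that $m(\Lambda_k)>1-\delta_0$ for a prescribed small $\delta_0$. Set $\eta=\min\{\vep_0/(4\epsilon_k),\,1\}$, so that the shadowing error $\eta\epsilon_k$ allowed by Theorem~\ref{thB} is at most $\vep_0/4$, and let $\beta=\beta(k,\eta)$ be the corresponding quasi-closing constant.

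Cover $\Lambda_k$ by finitely many balls of radius $\beta/2$; at least one such ball $B$ satisfies $\alpha:=m(B\cap\Lambda_k)>0$. Applying Birkhoff's theorem to $\mathbf{1}_{B\cap\Lambda_k}$ and integrating yields
\[
\frac{1}{N}\sum_{n=0}^{N-1}m\bigl(B\cap\Lambda_k\cap f^{-n}(B\cap\Lambda_k)\bigr)\longrightarrow\alpha^2,
\]
so for infinitely many $n$ the set $A_n:=B\cap\Lambda_k\cap f^{-n}(B\cap\Lambda_k)$ has $m(A_n)\ge\alpha^2/2$, and every $x\in A_n$ satisfies $d(x,f^n(x))<\beta$ with both $x$ and $f^n(x)$ in $\Lambda_k$. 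By the Brin--Katok local entropy formula applied to the ergodic measure $m$, there exist $\vep_1>0$, a set $G\subset M$ with $m(G)\ge 1-\alpha^2/4$, and an integer $N_0$ such that
\[
m(B_n(x,\vep_0))\le e^{-(h_m(f)-\gamma)n}\qquad\text{for all }x\in G,\ n\ge N_0,
\]
provided $\vep_0<\vep_1$ (which we may assume). For each such $n\ge N_0$ one has $m(A_n\cap G)\ge\alpha^2/4$; choosing a maximal $(n,\vep_0)$-separated set $E_n\subset A_n\cap G$, its Bowen $(n,\vep_0)$-balls cover $A_n\cap G$, whence
\[
|E_n|\ge\frac{\alpha^2}{4}\,e^{(h_m(f)-\gamma)n}.
\]

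For each $x\in E_n$, Theorem~\ref{thB} produces a point $y_x\in M$ with $f^n(y_x)\in\mathcal{W}^c(y_x)$ and $d(f^ix,f^iy_x)<\eta\epsilon_k\le\vep_0/4$ for $0\le i\le n-1$, hence $y_x\in P'_{n,1}(f)$. For distinct $x,x'\in E_n$, picking $0\le i<n$ with $d(f^ix,f^ix')\ge\vep_0$ yields $d(f^iy_x,f^iy_{x'})\ge\vep_0/2$ by the triangle inequality, which simultaneously shows that $x\mapsto y_x$ is injective and that $\{y_x:x\in E_n\}$ is an $(n,\vep_0/2)$-separated subset of $P'_{n,1}(f)$. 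Consequently $P'_{n,1}(f,\vep_0/2)\ge(\alpha^2/4)\,e^{(h_m(f)-\gamma)n}$ for infinitely many $n$; taking $\limsup_n n^{-1}\log(\cdot)$, then $\gamma\to 0$, and finally $\vep_0\to 0$, gives the theorem.

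The main technical obstacle is the extraction of exponentially many $(n,\vep_0)$-separated points from $A_n$, whose measure is only bounded below by a fixed positive constant rather than being close to $1$ as in the textbook form of Katok's entropy formula. This forces one to invoke Brin--Katok in its \emph{uniform} version on a set $G$ of large measure, so that the Bowen-ball decay $m(B_n(x,\vep_0))\le e^{-(h_m-\gamma)n}$ is available for every $x\in G$ simultaneously at all large $n$. A secondary subtlety is the coupling of the scales: $k$ must be large (so that $m(\Lambda_k)$ is close to $1$), which forces $\epsilon_k$ and then $\eta$ to be small, which in turn shrinks $\beta(k,\eta)$; nonetheless the finite cover of $\Lambda_k$ by $\beta/2$-balls is chosen \emph{after} $\beta$ is fixed, so at least one ball retains positive $m$-measure in $\Lambda_k$, which is what the argument requires.
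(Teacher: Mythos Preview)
Your proof is correct and follows the same broad strategy as the paper---Katok's periodic-point argument with the quasi-closing property (Theorem~\ref{thB}) in place of the hyperbolic closing lemma---but the implementation differs in two places. For the entropy input, the paper uses Katok's own characterisation $h_m(f)=\lim_{\gamma\to 0}\varliminf_n n^{-1}\ln N_f(n,\gamma,\delta)$ (minimal number of $(n,\gamma)$-Bowen balls needed to cover a set of measure $\ge 1-\delta$), whereas you invoke the Brin--Katok local entropy theorem together with an Egorov step to obtain a uniform bound $m(B_n(x,\vep_0))\le e^{-(h_m(f)-\gamma)n}$ on a large set $G$. For the recurrence input, the paper chooses a finite partition $\xi$ refining $\{\Lambda_k,M\setminus\Lambda_k\}$ with $\diam\,\xi<\beta$ and allows a \emph{flexible} return time $m(x)\in[n,(1+\gamma)n]$ to the same partition element; the ergodic theorem then gives $m(\Lambda_{k,n})\to m(\Lambda_k)$, so $N_f(n,l^{-1},\delta)$ applies directly with any $\delta>1-m(\Lambda_k)$, at the cost that the resulting quasi-periodic points have varying periods in $[n,(1+\gamma)n]$ and one must sum over this window and pass to a subsequence. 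Your fixed-period approach, using the Ces\`aro average of $m(B\cap\Lambda_k\cap f^{-n}(B\cap\Lambda_k))$ to extract infinitely many good $n$, avoids the variable-period bookkeeping but only yields a returning set of uniformly \emph{positive} (rather than nearly full) measure---which is exactly why you need Brin--Katok on $G$ instead of Katok's $N_f$. Both routes are standard; yours is slightly cleaner because all quasi-periodic points produced share the same period $n$, while the paper's stays closer to Katok's original formulation.
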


\section{Uniform partial hyperbolicity}\label{Uniform partial hyperbolicity}
Let $ M $ be a boundaryless compact smooth Riemannian manifold,  and $ f:M \to M $  be   a  $ C^{1+\alpha} $ diffeomorphism which has  a nonuniformly partially hyperbolic set $ \Lambda=\bigcup\limits_{k\ge 1}\Lambda_k(\lambda,\mu,\lambda',\mu',\varepsilon)  $.

By the definition of nonuniformly partially hyperbolic diffeomorphism,  the time to admit contracting  or expanding in  $ E^s_x $ or  $E^u_x $  is related to the position of $ x $.  Replacing $ \parallel \cdot \parallel $ with another metric $ \parallel \cdot \parallel'' $, which is equivalent to  $ \parallel \cdot \parallel $ on any given $ \Lambda_{k} $, we can get  uniformly   contracting or   expanding on  $ \Lambda $.  Further details are provided below.

Let 
\begin{equation*}
\begin{cases}
\lambda_1=\lambda-2\varepsilon,\\
\mu_1=\mu-2\varepsilon,\\
\lambda'_1=\lambda'+2\varepsilon,\\
\mu'_1=\mu'+2\varepsilon.\\
\end{cases}
\end{equation*}
For $ x\in\Lambda $, define
\begin{eqnarray*}
	\parallel v_{s} \parallel' _{s}&=&\sum_{n=0}^{\infty}e^{\lambda_1n}\parallel D_xf^n(v_s)\parallel, \,\, v_s\in E_x^s,\\[2mm]
	\parallel v_{c} \parallel' _{c}&=&\sum_{n=0}^{\infty}e^{-\mu'_1n}\parallel D_xf^n(v_c)\parallel+ \sum_{n=1}^{\infty}e^{-\lambda'_1n}\parallel D_xf^{-n}(v_c)\parallel, \,\, v_c\in E_x^c,\\[2mm]
	\parallel v_{u} \parallel' _{u}&=&\sum_{n=0}^{\infty}e^{\mu_1n}\parallel D_xf^{-n}(v_u)\parallel, \,\, v_u\in E_x^u.
\end{eqnarray*}	
Then  let 
\begin{equation*}
\begin{aligned}
&\parallel \cdot \parallel': \, T_\Lambda M \to \mathbb{R}\\[2mm]
&\parallel v \parallel'_x=\max\{\parallel v_{s} \parallel' _{s},\parallel v_{c} \parallel' _{c},\parallel v_{u} \parallel' _{u}\},
\end{aligned}
\end{equation*}
where $ v=v_s+v_c+v_u \in T_xM $ with $ v_i\in E_x^i,\;i=s,c,u $.

With respect to $ x\in\Lambda_k $, it holds that 
\begin{equation*}
\sum_{n=0}^{\infty}e^{\lambda_1n}\parallel D_xf^n(v_s)\parallel\le \sum_{n=0}^{\infty}e^{\lambda_1n}e^{-(\lambda-\varepsilon)}e^{\varepsilon k}\parallel v_s\parallel \le \sum_{n=0}^{\infty}e^{-n\varepsilon}e^{k\varepsilon} \parallel v_s\parallel<\infty.
\end{equation*}
Similarly, we  have $\sum_{n=0}^{\infty}e^{-\mu'_1n}\parallel D_xf^n(v_c)\parallel+ \sum_{n=1}^{\infty}e^{-\lambda'_1n}\parallel D_xf^{-n}(v_c)\parallel<\infty $ and $\displaystyle \sum_{n=0}^{\infty}e^{\mu_1n}\parallel D_xf^{-n}(v_u)\parallel<\infty $.

With the metric $ \parallel \cdot \parallel' $,   $ f:\Lambda \to \Lambda $ behaves  uniformly partially hyperbolic: 

\begin{Prop}\label{metric}
	\begin{eqnarray*} \frac{1}{\|Df^{-1}\|} \|v_s\|'\le  &\parallel Df(v_s)\parallel' &\le e^{-\lambda_2}\parallel v_s \parallel',\;v_s\in E_x^{s} ;\\
		e^{-\lambda'_2}\parallel v_c \parallel' \, \, \le  \,\, & \parallel Df(v_c)\parallel' &\le  e^{\mu''_2}\parallel v_c \parallel',\;v_c\in E_x^{c} ;\\
		\frac{1}{\|Df\|} \|v_u\|' \le 	&\parallel Df^{-1}(v_u)\parallel'&\le  e^{-\mu_2}\parallel v_u \parallel',\;v_u\in E_x^{u}. \end{eqnarray*}
\end{Prop}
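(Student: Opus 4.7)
The plan is to follow the classical Pesin--Mather construction of an adapted (Lyapunov) norm: once $\|\cdot\|'$ is defined as an exponentially weighted sum along the orbit, applying $Df$ acts on each series as a simple index shift that peels off the first term. All three inequalities reduce to two-line algebraic identities.

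First I will verify convergence of the three series defining $\|\cdot\|'_s$, $\|\cdot\|'_c$, $\|\cdot\|'_u$ on every Pesin block $\Lambda_k$. This is exactly the finiteness computation carried out just before the proposition, using conditions (i)--(iii) of Definition \ref{def1} together with the choices $\lambda_1 = \lambda - 2\varepsilon$, $\mu_1 = \mu - 2\varepsilon$, $\lambda'_1 = \lambda' + 2\varepsilon$, $\mu'_1 = \mu' + 2\varepsilon$, so that every term decays at exponential rate $\varepsilon$. The $e^{\varepsilon k}$ prefactor then shows that $\|\cdot\|'$ is comparable to $\|\cdot\|$ on each $\Lambda_k$ with constants depending on $k$.

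For $E^s$ the core step is the index shift: for $v_s \in E^s_x$,
\begin{equation*}
\|Df(v_s)\|'_s = \sum_{n=0}^{\infty} e^{\lambda_1 n}\|D_xf^{n+1}(v_s)\| = e^{-\lambda_1}\bigl(\|v_s\|'_s - \|v_s\|\bigr),
\end{equation*}
which immediately yields the upper bound with $\lambda_2 := \lambda_1$. Rewriting the same identity as $\|v_s\|'_s = e^{\lambda_1}\|Df(v_s)\|'_s + \|v_s\|$ and using $\|v_s\| \le \|Df^{-1}\|\,\|Df(v_s)\| \le \|Df^{-1}\|\,\|Df(v_s)\|'_s$ gives the lower bound $(1/\|Df^{-1}\|)\|v_s\|' \le \|Df(v_s)\|'$ after a trivial rearrangement. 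The $E^u$ bounds are completely symmetric: one repeats the above with $f^{-1}$ in place of $f$.

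The center estimate is the only mildly involved piece, since $\|\cdot\|'_c$ is the sum of a forward series (weighted by $e^{-\mu'_1 n}$) and a backward series (weighted by $e^{-\lambda'_1 n}$). Performing the index shift on each piece separately gives
\begin{equation*}
\|Df(v_c)\|'_c = e^{\mu'_1}\bigl(\mathrm{forward}(v_c) - \|v_c\|\bigr) + e^{-\lambda'_1}\bigl(\|v_c\| + \mathrm{backward}(v_c)\bigr),
\end{equation*}
from which I read off the upper bound at rate $e^{\mu'_1}$ (taking the larger exponential, and noting that the residual $(e^{-\lambda'_1} - e^{\mu'_1})\|v_c\|$ has the favourable sign). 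The matching lower bound at rate $e^{-\lambda'_1}$ is obtained by applying the analogous upper-bound computation to $Df^{-1}$ on $E^c$, which is symmetric under swapping the forward and backward series, and then inverting. The main obstacle I expect is purely bookkeeping: keeping track of which base point each series is attached to when shifting indices, and checking that the cross terms in the center identity can be absorbed without spoiling the exponential rate; beyond this, no deep ingredients are needed.
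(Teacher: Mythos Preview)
Your overall strategy (index shift on the weighted series defining the adapted norm) is exactly what the paper does, and your upper bound for $E^s$, both bounds for $E^c$, and the $E^u$ case by symmetry are all correct and match the paper's argument essentially line for line.

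There is, however, a genuine gap in your lower bound for $E^s$ (hence also for $E^u$). From your exact identity $\|v_s\|'_s = e^{\lambda_1}\|Df(v_s)\|'_s + \|v_s\|$ together with $\|v_s\| \le \|Df^{-1}\|\,\|Df(v_s)\| \le \|Df^{-1}\|\,\|Df(v_s)\|'_s$, the best you obtain is
\[
\|Df(v_s)\|'_s \ \ge\ \frac{1}{e^{\lambda_1} + \|Df^{-1}\|}\,\|v_s\|'_s,
\]
which is strictly weaker than the stated $\frac{1}{\|Df^{-1}\|}\|v_s\|'_s$ since $e^{\lambda_1}>1$. No ``trivial rearrangement'' of that identity recovers the correct constant; the peeled-off $n=0$ term carries too little information.

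The paper obtains the sharp constant by a different, simpler manipulation: it does \emph{not} peel off a term but instead bounds every summand from below. Writing $D_xf^{n+1}=D_{f^n(x)}f\circ D_xf^n$ and using $\|D_{f^n(x)}f(w)\|\ge \|Df^{-1}\|^{-1}\|w\|$ on each term gives
\[
\|Df(v_s)\|'_s=\sum_{n\ge 0}e^{\lambda_1 n}\|D_{f^n(x)}f(D_xf^n v_s)\|\ \ge\ \frac{1}{\|Df^{-1}\|}\sum_{n\ge 0}e^{\lambda_1 n}\|D_xf^n v_s\|=\frac{1}{\|Df^{-1}\|}\|v_s\|'_s,
\]
and the $E^u$ lower bound follows in the same way with $f^{-1}$. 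Replace your rearrangement step with this termwise estimate and the proof is complete.
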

\begin{proof}

	\begin{itemize}\item[(1)]$E^s$: \begin{eqnarray*}
			\parallel D_xf(v_{s}) \parallel' 
			&=&\sum_{n=0}^{\infty}e^{\lambda_1n}\parallel D_xf^{n+1}(v_s)\parallel\\
			&=&e^{-\lambda_1}\sum_{n=0}^{\infty}e^{\lambda_1(n+1)}\parallel D_xf^{n+1}(v_s)\parallel\\
			&\le& e^{-\lambda_1}\sum_{n=0}^{\infty}e^{\lambda_1n}\parallel D_xf^{n}(v_s)\parallel\\	
			&\le &
			e^{-\lambda_1}\parallel v_s \parallel'.
		\end{eqnarray*}
		Besides, 
		\begin{eqnarray*}
			\parallel D_xf(v_{s}) \parallel' 
			&=&\sum_{n=0}^{\infty}e^{\lambda_1n}\parallel D_{f^n(x)} f \circ D_xf^{n}(v_s)\parallel\\
			&\ge &\frac{1}{\|Df^{-1}\|} \sum_{n=0}^{\infty} e^{\lambda_1n} \parallel D_xf^{n}(v_s)\parallel\\
			&=&  \frac{1}{\|Df^{-1}\|} \|v_s\|'.
		\end{eqnarray*}
		
		\item[(2)]$E^c$:  
		\begin{eqnarray*}
			\parallel D_xf(v_{c}) \parallel'
			&=&\sum_{n=0}^{\infty}e^{-\mu'_1n}\parallel D_xf^{n+1}(v_c)\parallel+ \sum_{n=1}^{\infty}e^{-\lambda'_1n}\parallel D_xf^{-(n-1)}(v_c)\parallel\\
			&=& e^{\mu'_1}\sum_{n=0}^{\infty}e^{-\mu'_1(n+1)}\parallel D_xf^{n+1}(v_c)\parallel+e^{-\lambda'_1} \sum_{n=1}^{\infty}e^{-\lambda'_1(n-1)}\parallel D_xf^{-(n-1)}(v_c)\parallel,
		\end{eqnarray*}
		which implies 
		$$  e^{-\lambda'_1} \parallel v_c \parallel' \le \parallel D_xf(v_{c}) \parallel'\le e^{\mu'_1} \parallel v_c \parallel'.$$
		\item[(3)]$E^u$:  
		Similarly, we have 
		\begin{equation*} \frac{1}{\|Df\|} \|v_u\|'\le  \parallel D_xf^{-1}(v_{u}) \parallel' \le e^{-\mu_1}\parallel v_u \parallel'.\end{equation*}.
	\end{itemize}
\end{proof}
If $ \parallel v \parallel'=\parallel v_s \parallel'_s $, then 
\begin{eqnarray*}
	\parallel v \parallel'& = &\parallel v_s \parallel'_s=\sum_{n=0}^{\infty}e^{\lambda_1n}\parallel D_xf^n(v_s)\parallel\\
	& \le & \sum_{n=0}^{\infty}e^{\lambda_1n}e^{\varepsilon k}e^{-(\lambda-\varepsilon)n}\parallel v_s \parallel \\
	&= &  e^{\varepsilon k} \sum_{n=0}^{\infty}e^{-\varepsilon n} \parallel v_s \parallel.
\end{eqnarray*}
If $ \parallel v \parallel'=\parallel v_c \parallel'_c $, then 
\begin{eqnarray*}
	\parallel v \parallel'& =& \parallel v_c \parallel'_c=\sum_{n=0}^{\infty}e^{-\mu'_1n}\parallel D_xf^n(v_c)\parallel+ \sum_{n=1}^{\infty}e^{-\lambda'_1n}\parallel D_xf^{-n}(v_c)\parallel\\
	& \le & \sum_{n=0}^{\infty}e^{-\mu'_1n}e^{\varepsilon k}e^{(\mu'+\vep)n}  \parallel v_c \parallel+ \sum_{n=1}^{\infty}e^{-\lambda'_1n}e^{\varepsilon k}e^{(\lambda'+\vep)n}  \parallel v_c \parallel\\
	&\le & e^{\varepsilon k} \sum_{n=-\infty}^{\infty}e^{-\varepsilon |n|} \parallel v_c \parallel.
\end{eqnarray*}
Similarly, if $ \parallel v \parallel'=\parallel v_u \parallel'_u $, then  $ \parallel v \parallel'=\parallel v_u \parallel'_u \le e^{\varepsilon k}  \displaystyle\sum_{n=0}^{\infty}e^{-\varepsilon n} \parallel v_u \parallel$.

Denote  $ C=\displaystyle\sum_{n=-\infty}^{\infty}e^{-\varepsilon |n|} $, then 
$ \parallel v \parallel' \le C e^{\varepsilon k}\parallel v \parallel$.  Moreover, 
\begin{equation*}
\begin{aligned}
\parallel v \parallel^2 &=\langle v_s+v_c+v_u,v_s+v_c+v_u\rangle \\[2mm]
&=\parallel v_s \parallel^2 + \parallel v_c \parallel^2+\parallel v_u \parallel^2+2\langle v_s,v_c \rangle+2\langle v_s,v_u \rangle+2\langle v_s,v_u \rangle\\[2mm]
&\le 3(\parallel v_s \parallel^2 + \parallel v_c \parallel^2+\parallel v_u \parallel^2) \le 3[(\parallel v_s \parallel'_s )^2+ (\parallel v_c \parallel'_c)^2+(\parallel v_u \parallel'_u)^2] \\[2mm]
&\le 9(\parallel v \parallel')^2.
\end{aligned}
\end{equation*}
Thus we have $\parallel v \parallel \le 3 \parallel v \parallel' $. Consequently, we obtain 
\begin{equation}\label{metric relation}
\frac{1}{3}\parallel v \parallel \le  \parallel v \parallel' \le C e^{\varepsilon k}\parallel v \parallel,
\end{equation}
which means metric $ \parallel \cdot \parallel' $ is equivalent to $ \parallel \cdot \parallel $ on $ \Lambda_k $. But  metric $ \parallel \cdot \parallel' $ is not equivalent to $ \parallel \cdot \parallel $ on the whole $ \Lambda=\bigcup\limits_{k\ge 1}\Lambda_k $, as $ \lim\limits_{k\to\infty}  C e^{\varepsilon k}=+\infty$. 

By taking  small neighborhood $ U $ of   any $x\in M $ with uniform size, we can
trivialize the tangent bundle over $ U $ by identifying $ T_U M=U \times \mathbb{R}^{\dim M} $. For any point $ y\in U $
and tangent vector $ v\in T_y M $, we can use the identification $ T_U M=U \times \mathbb{R}^{\dim M} $  to translate
the vector $ v $ to a corresponding vector $ \bar{v}\in T_xM $. For $x\in \Lambda_k$,  define 
\begin{equation*}
\begin{aligned}
&\parallel \cdot \parallel'': T_{U}M\to \mathbb{R}\\
&\parallel v \parallel''_y=\parallel \bar{v} \parallel'_x
\end{aligned}
\end{equation*}
and a new splitting $ T_y M=E_y^{s''}\oplus E_y^{c''}\oplus E_y^{u''} $ by translating the splitting $ T_x M=E_x^{s}\oplus E_x^{c}\oplus E_x^{u} $.
Denoting  $E^{cs''}_y=E^{s''}_y\oplus E^{c''}_y$ and  $E^{cu''}_y=E^{c''}_y\oplus E^{u''}_y, $ we have the splittings $ T_y M=E^{cs''}_y\oplus E_y^{u''}$ and  $E_y^{cs}=  E_y^{c''}\oplus E_y^{u''} $. 
Similarly, for a small neighborhood $ V $ of $ f(x) $  and any $ z\in V $, we can define a metric $ \parallel \cdot \parallel''_z $ by $ \parallel \cdot \parallel'_{f(x)} $ and a new splitting $ T_z M=E_z^{s''}\oplus E_z^{c''}\oplus E_z^{u''} $ by translating the splitting $ T_{f(x)} M=E_{f(x)}^{s}\oplus E_{f(x)}^{c}\oplus E_{f(x)}^{u} $. 

For a splitting $F=F_1\oplus F_2$ of an Euclidean space $F$ with norm $\|\,\|$, and $\xi>0$,  we denote by $Q_{\|\,\|}(F_1,\xi)$ the cone of width $\xi$ of $F_1$ in $\|\,\|$, i.e.  the set $\{v=v_1+v_2\in F:\,v_{1}\in F_1,\,v_2\in F_2,\quad \|v_{2}\|\le \xi \|v_{1}\|\}$.

\begin{Prop}\label{Pro}
	Let $\epsilon=\vep/\alpha$.   	Take $\lambda_2=\lambda_1-\vep,\; \mu_2=\mu_1-\vep,\; \lambda_2'=\lambda_1' +\vep$ and $ \mu_2'=\mu_1'+\vep $;  $\lambda_3=\lambda_2-\vep ,\; \mu_3=\mu_2-\vep,\; \lambda_3'=\lambda_2' +\vep$ and $ \mu_3'=\mu_2'+\vep $. 
	\begin{itemize}
		\item [(i)]There exists $ \epsilon_0>0$ such that for any $x\in\Lambda_k$,  $y\in B(x, \epsilon_0 e^{-k\epsilon})$,  one has 	\begin{eqnarray*}  \Big{(}\frac{1}{\|Df^{-1}\|}-(e^{\vep}-1)\Big{)} \|v_{s}\|''\le & \parallel Df(v_s)\parallel'' &\le  e^{-\lambda_2}\parallel v_s \parallel'',\;v_s\in E_y^{s''};\\
			e^{-\lambda'_2}\parallel v_c \parallel'  \le   & \parallel Df(v_c)\parallel'' &\le  e^{\mu'_2}\parallel v_c \parallel'',\;v_c\in E_y^{c''};\\
			\Big{(} \frac{1}{\|Df\|}-(e^{\vep}-1)\Big{)} \|v_u\|''\le    &   \parallel Df^{-1}(v_u)\parallel''&\le  e^{-\mu_2}\parallel v_u \parallel'',\;v_u\in E_y^{u''}. \end{eqnarray*}
		\item [(ii)] For any small  $\xi>0$, 
		\begin{eqnarray*}   &\parallel Df(v)\parallel'' &\le  e^{-\lambda_3}\parallel v \parallel'',\;v\in Q_{\|\|_y''}(E^{s''}_y, \xi);\\
			e^{-\lambda'_3}\parallel v \parallel'  \le   & \parallel Df(v)\parallel'' &\le  e^{\mu'_3}\parallel v \parallel'',\;v \in Q_{\|\|_y''}(E^{c''}_y, \xi);\\
			&   \parallel Df^{-1}(v)\parallel''&\le  e^{-\mu_3}\parallel v \parallel'',\;v_u\in Q_{\|\|_y''}(E^{u''}_y, \xi). \end{eqnarray*}
		Moreover,  there exist $\varsigma\in (0,1)$ and $a_{\xi}>0$ such that  for any $x\in\Lambda_k$,  $y\in B(x,  a_{\xi}\epsilon_0 e^{-k\epsilon})$, \begin{eqnarray*} D_yf(Q_{\|\|_{y}''}(E^*_y, \xi))&\subset &  Q_{\|\|_{f(y)}''}(E^*_{f(y)}, \varsigma \xi),\quad *\in \{u'',\, cu''\},\\
			D_yf^{-1}(Q_{\|\|_y''}(E^{*}_y, \xi)) &\subset & Q_{\|\|''_{f^{-1}(y)}}(E^{*}_{f^{-1}(y)}, \varsigma\xi), \quad *\in \{s'', \, cs''\}.  \end{eqnarray*}
	\end{itemize}	
	
\end{Prop}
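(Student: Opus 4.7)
The plan is to exploit the H\"older continuity of $Df$ together with the calibration $\epsilon=\varepsilon/\alpha$. On any trivialization chart around $x\in\Lambda_k$, the $C^{1+\alpha}$ regularity gives a constant $L$ with $\|D_yf-D_xf\|_{\mathrm{op}}\le L\,d(y,x)^\alpha$ in the Riemannian norm (after identifying $T_UM\simeq U\times\mathbb{R}^{\dim M}$). By \eqref{metric relation}, $\|\cdot\|''\le Ce^{\varepsilon k}\|\cdot\|$ on $\Lambda_k$, while $d(y,x)\le \epsilon_0 e^{-k\epsilon}$ forces $d(y,x)^\alpha\le \epsilon_0^\alpha e^{-k\varepsilon}$; the product of these two $k$-dependent factors is $\le C\epsilon_0^\alpha$, independent of $k$. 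This is the cancellation enabled by $\epsilon=\varepsilon/\alpha$, and it lets every perturbation estimate be absorbed uniformly by shrinking $\epsilon_0$.

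For part (i), I would pick $v_s\in E^{s''}_y$, let $\bar v_s\in E^s_x$ be its trivialization image (so $\|v_s\|''_y=\|\bar v_s\|'_x$), and write $\overline{D_yf(v_s)}$ for the translate of $D_yf(v_s)$ to $T_{f(x)}M$. Then $\overline{D_yf(v_s)}=D_xf(\bar v_s)+R$ with $\|R\|\le L\,d(y,x)^\alpha\|\bar v_s\|$. Proposition~\ref{metric} bounds $\|D_xf(\bar v_s)\|'_{f(x)}\le e^{-\lambda_1}\|\bar v_s\|'_x$, and converting $\|R\|$ to the $\|\cdot\|''$ norm via \eqref{metric relation} produces an error $\le 3LC\epsilon_0^\alpha\|v_s\|''_y$. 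Combining,
\[
\|D_yf(v_s)\|''_{f(y)} \le \bigl(e^{-\lambda_1}+3LC\epsilon_0^\alpha\bigr)\|v_s\|''_y \le e^{-\lambda_2}\|v_s\|''_y
\]
once $\epsilon_0$ is chosen so that $3LC\epsilon_0^\alpha\le e^{-\lambda_1}(e^\varepsilon-1)$. The lower bound and the $c''$, $u''$ cases are parallel, using the two-sided estimates of Proposition~\ref{metric} plus the same uniform H\"older error.

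For part (ii), I decompose $v\in Q_{\|\|''_y}(E^{*''}_y,\xi)$ as $v=v_*+w$ with $v_*\in E^{*''}_y$ and $w$ in the complement, so $\|v\|''=\|v_*\|''$. Applying part (i) to both summands gives a norm bound of the form $\|Df(v)\|''\le(\mathrm{leading\ rate}+O(\xi)+O(\epsilon_0^\alpha))\|v\|''$, which yields the claimed $e^{-\lambda_3}$, $e^{\mu'_3}$, $e^{-\mu_3}$ bounds once $\xi$ and $\epsilon_0$ are small. The cone invariance $D_yf(Q(E^{u''}_y,\xi))\subset Q(E^{u''}_{f(y)},\varsigma\xi)$ is then the standard cone-shrinking argument: in the splitting at $f(y)$, the $u''$-component of $Df(v)$ grows by at least $\sim e^{\mu_2}$ while the $cs''$-component grows by at most $\sim e^{\mu'_2}$, up to H\"older cross-terms of size $\sim\epsilon_0^\alpha$. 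Since $\mu_2>\mu'_2$, choosing $a_\xi$ small enough relative to $\xi$ absorbs the cross-terms and produces $\varsigma=e^{-(\mu_2-\mu'_2)}+o(1)\in(0,1)$; the $cu''$ case is identical, and the $s''$, $cs''$ cases for $Df^{-1}$ are dual. The main obstacle will be tracking this $k$-dependence: although $\|\cdot\|''$ blows up on deep Pesin blocks, the radius $\epsilon_0 e^{-k\epsilon}$ is calibrated so that the H\"older error of $Df$, re-expressed in $\|\cdot\|''$, remains uniformly small across all blocks---this precise cancellation between the $e^{-k\varepsilon}$ H\"older factor and the $e^{\varepsilon k}$ norm-equivalence factor is the heart of the construction.
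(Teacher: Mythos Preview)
Your proposal is correct and follows essentially the same route as the paper: part (i) is proved exactly as you outline (the paper writes the H\"older constant as $K$ and packages the choice of radius as $\epsilon_k=\epsilon_0 e^{-k\epsilon}$ via the same cancellation $Ce^{\varepsilon k}\cdot \epsilon_0^\alpha e^{-k\varepsilon}=C\epsilon_0^\alpha$ you identify), and part (ii) is likewise deduced from (i) by taking $\xi$ small. The only organizational difference is in the cone-invariance step: rather than your direct component-ratio computation, the paper first establishes the cone contraction for $D_xf$ itself with some $\varsigma_1<1$ (using the invariance and domination at $x$), then bounds $\|D_xf-D_yf\|''\le a_\xi^\alpha$ and argues that the angular perturbation $\angle''(D_yf(v),D_xf(v))$ is small enough to land in the $\varsigma\xi$-cone for any fixed $\varsigma\in(\varsigma_1,1)$---but this is just a repackaging of the same estimate.
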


\begin{proof}  Note that $ f $ is a  $ C^{1+\alpha} $ diffeomorphism and so $ f^{-1} $ is also $C^{1+\alpha}$.   There exists  a constant   $ K >0$ such that  $$ \parallel D_xf(v)-D_yf(v) \parallel \le K|x-y|^\alpha  \parallel v \parallel\quad \text{and}\quad  \parallel D_xf^{-1}(v)-D_yf^{-1}(v) \parallel \le K|x-y|^\alpha  \parallel v \parallel.$$ 
	
	(i)  Denote by $ B(x,r) $  the ball centered at $ x $ with radius $ r $. Let
	\begin{eqnarray*}
		\epsilon_k=\min\biggl\{1,  \biggl(\frac{e^{-\lambda_2}-e^{-\lambda_1}}{3Ce^{\varepsilon(k+1)}K}\biggr)^{\frac{1}{\alpha}},
		\biggl(\frac{e^{\lambda'_2}-e^{\lambda'_1}}{3Ce^{\varepsilon(k+1)}K}\biggr)^{\frac{1}{\alpha}},   \biggl(\frac{(e^{-\mu_2}-e^{-\mu_1})}{3Ce^{\varepsilon(k+1)}K}\biggr)^{\frac{1}{\alpha}},  
		\biggl(\frac{(e^{\mu'_2}-e^{\mu'_1})}{3Ce^{\varepsilon(k+1)}K}\biggr)^{\frac{1}{\alpha}}
		\biggl\},
	\end{eqnarray*}
	then  for any $ y\in B(x,\epsilon_\kappa) $, we have 
	\begin{itemize}
		\item[(1)]$E^s$: 
		\begin{eqnarray*}
			\parallel D_yf(v_s)\parallel''_{f(y)}&=& \parallel D_yf(v_s)\parallel'_{f(x)}\\[2mm]
			&\le & \parallel D_xf(v_s)\parallel'_{f(x)}+\parallel D_yf(v_s)-D_xf(v_s)\parallel'_{f(x)}\\[2mm]
			&\le& e^{-\lambda_1}\parallel v_s \parallel'_x+Ce^{\varepsilon(k+1)}K|y-x|^{\alpha}\parallel v_s \parallel_x\\[2mm]
			&\le& (e^{-\lambda_1}+3Ce^{\varepsilon(k+1)}K|y-x|^{\alpha} )\parallel v_s \parallel'_x\\[2mm]
			&\le & e^{-\lambda_2}\parallel v_s \parallel''_y,\,\,v_s\in E_y^{s''} .
		\end{eqnarray*}
	\end{itemize}
	Besides, \begin{eqnarray*}
		\parallel D_yf(v_s)\parallel''_{f(y)}&=& \parallel D_yf(v_s)\parallel'_{f(x)}\\[2mm]
		&\ge & \parallel D_xf(v_s)\parallel'_{f(x)}-\parallel D_yf(v_s)-D_xf(v_s)\parallel'_{f(x)}\\[2mm]
		&\ge&   \frac{1}{\|Df^{-1}\|} \|v_s\|'_x-Ce^{\varepsilon(k+1)}K|y-x|^{\alpha}\parallel v_s \parallel_x\\[2mm]
		&\ge&   \frac{1}{\|Df^{-1}\|} \|v_s\|'_x-3Ce^{\varepsilon(k+1)}K|y-x|^{\alpha}\parallel v_s \parallel_x'\\[2mm]
		&\ge & \big{(}\frac{1}{\|Df^{-1}\|} - (e^{\vep}-1)\big{)} \parallel v_s \parallel_x', \,\,\,v_s\in E_y^{s''}.
	\end{eqnarray*}
	
	Similarly, we can further have that
	\begin{itemize}	
		\item[(2)]$E^c$:  \begin{equation*}  e^{-\lambda'_2}\parallel v_c \parallel''_y\le\parallel D_yf(v_c)\parallel''_{f(y)}\le e^{\mu'_2}\parallel v_c \parallel''_y,\,\,v_c\in E_y^{c'}; \end{equation*}
		\item[(3)]$E^u$:  
		\begin{equation*}  \Big{(} \frac{1}{\|Df\|}-(e^{\vep}-1)\Big{)} \|v_u\|''_y\le    \parallel D_yf^{-1}(v_u)\parallel''_{f^{-1}(y)}\le e^{-\mu_2}\parallel v_u \parallel''_y,\,\,v_u\in E_y^{u'}.\end{equation*}
	\end{itemize}

	Let $ \epsilon=\frac{\varepsilon}{\alpha} $.  At most let $\epsilon_k$ small with finite $k$,  we can  take $ \epsilon_0>0 $, which is independent of $ k $, such that $ \epsilon_{k}= \epsilon_0 e^{-\epsilon k} $.  
	
	(ii) The first part on the inequalities can be obtained by (i) by taking small $\xi$.   We are going to show the cone properties.  Using the invariance of $E^u$ and the domination property at $x$ there exists $\varsigma_1\in (0,1)$ independent of $x$ satisfying $D_xf(Q_{\|\|_x''}(E^u_x, \xi))\subset Q_{\|\|_{f(x)}''}(E^u_{f(x)},\varsigma_1\xi)$. Then for any $y\in B(x, a_\xi\epsilon_k)$, we get by  (\ref{metric relation}) and the choice of $\epsilon_k$, 
	\begin{eqnarray*}
		\|D_xf-D_yf\|_x''& = &\max_{\|v\|_x''=1}\|D_xf(v)-D_yf(v)\|_{f(x)}''\\ [2mm]
		&\leq& Ce^{(k+1)\vep}\|D_xf-D_yf\| \\ [2mm]
		& \leq& Ce^{(k+1)\vep} K (a_{\xi}\epsilon_{k})^{\alpha}\\ [2mm]
		&\leq & a_\xi^{\alpha}.
	\end{eqnarray*}
	For  small $\vep$,  by (i)   one has also \begin{eqnarray*}
		\frac{1}{2\|Df^{-1}\|}\le \min_{\|v\|_x''=1}\|D_yf(v)\|_{f(x)}''\le \max_{\|v\|_x''=1}\|D_yf(v)\|_{f(x)}''
		\le 2\|Df\|.
	\end{eqnarray*}
	Take $\varsigma\in (\varsigma_1, 1)$. It follows that for    $\|v\|_x''=1$,   the angle $\angle''( D_{y}f(v),D_{x}f(v))$ with respect to $\|\cdot\|_{f(x)}''$ is less than   $\arctan(\varsigma\xi)-\arctan(\varsigma_1\xi)$ for $a_{\xi}$ small enough. We conclude that $D_yf\left(Q_{\|\|_x''}(E^u_y, \xi\right)\subset Q_{\|\|_{f(y)}''}\left(E^u_{f(y)}, \varsigma\xi\right)$ for any $y\in B(x,a_{\xi}\epsilon_k)$.  The  cone invariance property for $E^{cu''}$, $E^{s''}$ and $E^{cs''}$ can be obtained similarily. \\
\end{proof}

\subsection{H\"older continuity of Oseledets splitting} A $k$-dimensional distribution $E$  on a smooth manifold $M$ is a family of $k$-dimensional subspaces $E(x)\subset T_xM$. A Riemannian metric on $M$ naturally induces distances in $TM$ and in the space of $k$-dimensional distributions on $TM$.   The  H\"older continuity of a distribution $E$ can be defined using these distances.   By the Whitney Embedding Theorem,  $M$ can be embedded in  an Euclidead space $\mathbb{R}^N$.          Since $M$ is compact, the Riemannian metric on $M$ is equivalent to the distance $\|x-y\|$ by the embedding for $x, y\in M$.       Let $A\subset \mathbb{R}^{N}$ be a   subspace and $v \in \mathbb{R}^{N}$  be a vector,  set $$d(v, A)=\min_{w\in A} \|v-w\|.$$
For subspaces $A$ and $B$ in $ \mathbb{R}^{N}$, define 
$$d(A, B)=\max\Big{\{}  \max_{v\in A, \|v\|=1} d(v, B),\,   \max_{v\in B, \|v\|=1} d(w, A)\Big{\}}.$$
Let $D\subset \mathbb{R}^{N} $ be a subset and let $E$ be a $k$-dimensional distribution. The distribution $E$ is said to be H\"older continuous with  H\"older exponent $\beta\in (0,1]$ and  H\"older constant $L\ge 0$ if there exists $\vep_0>0$ such that 
$$d(E(x), E(y))\le L \|x-y\|^{\beta}$$
for every $x,y\in D$ with $\|x-y\|\le \vep_0$. 

Considering $E^{s}\oplus E^{cu}$, for $x\in \Lambda_k$, let $\theta_n=\angle(E^s(f^n(x)), E^{cu}(f^n(x)))$ for $n\in\mathbb{N}$. For any unit vector $v\in (E^s(x))^{\perp}$,  denote $v=v_s+v_{cu}$ with $v_s\in E^s(x)$, $v_{cu}\in E^{cu}(x)$. It holds that $\|v_{cu}\|\ge \frac{1}{|\sin \theta_0|}\ge 1$. Then 
\begin{eqnarray*} 
	\|Df^n v\| &\ge & \|Df^n v_{cu}\| |\sin \theta_n| \\[2mm]
	&\ge & \ e^{-k\vep} e^{-n(\lambda'+\vep)} |\sin \theta_n|  \\[2mm]
	&\ge &    e^{-k\vep} e^{-n(\lambda'+\vep)}   e^{-(k+n)\vep} \\[2mm]
	&=&    e^{-2k\vep} e^{-n(\lambda'+2\vep)}.
\end{eqnarray*}
Besides, for  unit vector $v\in E^s(x)$,  \begin{eqnarray*} 
	\|Df^n v\| &\le &  e^{k\vep} e^{-n(\lambda-\vep)},
\end{eqnarray*}
and by Lemma 4.16 of \cite{BP}  for every $a>\max \{\|D_zf\|^{1+\alpha},\,\|D_zf^{-1}\|^{1+\alpha}:\, z\in M\}$,  there exists $D>1$ such that  for every $n\in\mathbb{N}$ and every $x,y\in M$ we have 
$$\|D_xf^{\pm n}-D_yf^{\pm n}\|\leqslant Da^n \|x-y\|^{\alpha}.$$
By Lemma 4.14 of \cite{BP}, we have for $x, y\in \Lambda_k$, 
\begin{eqnarray*} d(E^{s}(x), E^s(y))&\leqslant& 3(e^{2k\vep})^2\frac{e^{-(\lambda'+2\vep)}}{e^{-(\lambda-\vep)}}(D\|x-y\|^{\alpha})^{\frac{-(\lambda'+2\vep)+\lambda-\vep}{\ln a+\lambda-\vep}}\\[2mm]
	&=& 3e^{4k\vep}e^{\lambda-\lambda'-3\vep}(D\|x-y\|^{\alpha})^{\frac{\lambda-\lambda'-3\vep}{\ln a+\lambda-\vep}}
\end{eqnarray*}
Similarily,  consider  $E^{cs}\oplus E^{u}$.   For any unit vector $v\in (E^{cs}(x))^{\perp}$, it holds that 
\begin{eqnarray*} 
	\|Df^n v\| \ge    e^{-2k\vep} e^{n(\mu-2\vep)}.
\end{eqnarray*}Besides, for  unit vector $v\in E^{cs}(x)$,  \begin{eqnarray*} 
	\|Df^n v\| &\le &  e^{k\vep} e^{n(\mu'+\vep)}.
\end{eqnarray*}
We have for $x, y\in \Lambda_k$, 
\begin{eqnarray*} d(E^{cs}(x), E^{cs}(y))&\leqslant& 3(e^{2k\vep})^2\frac{e^{\mu-2\vep}}{e^{\mu'+\vep}}(D\|x-y\|^{\alpha})^{\frac{\mu-2\vep-(\mu'+\vep)}{\ln a- (\mu'+\vep)}}\\[2mm]
	&=& 3e^{4k\vep}e^{\mu-\mu'-3\vep}(D\|x-y\|^{\alpha})^{\frac{\mu-\mu'-3\vep}{\ln a-\mu'-\vep}}.
\end{eqnarray*}
Moreover, considering $f^{-1}$,  we can get that for $x, y\in \Lambda_k$, 
\begin{eqnarray*} d(E^{u}(x), E^{u}(y))&\leqslant& 3(e^{2k\vep})^2\frac{e^{-(\mu'+2\vep)}}{e^{-(\mu-\vep)}}(D\|x-y\|^{\alpha})^{\frac{-(\mu'+2\vep)+(\mu-\vep)}{\ln a+(\mu-\vep)}}\\[2mm]
	&=& 3e^{4k\vep}e^{\mu-\mu'-3\vep}(D\|x-y\|^{\alpha})^{\frac{\mu-\mu'-3\vep}{\ln a+\mu-\vep}},
\end{eqnarray*}
and 
\begin{eqnarray*} d(E^{cu}(x), E^{cu}(y))&\leqslant& 3(e^{2k\vep})^2\frac{e^{\lambda-2\vep}}{e^{\lambda'+\vep}}(D\|x-y\|^{\alpha})^{\frac{\lambda-2\vep-(\lambda'+\vep)}{\ln a-(\lambda'+\vep)}}\\[2mm]
	&=& 3e^{4k\vep}e^{\lambda-\lambda'-3\vep}(D\|x-y\|^{\alpha})^{\frac{\lambda-\lambda'-3\vep}{\ln a-\lambda'-\vep}}.
\end{eqnarray*}
Denote  $$a_1=\max\Big{\{} \frac{\lambda-\lambda'-3\vep}{\ln a+\lambda-\vep},  \,\,\frac{\mu-\mu'-3\vep}{\ln a-\mu'-\vep},\,\, \frac{\mu-\mu'-3\vep}{\ln a+\mu-\vep}, \,\, \frac{\lambda-\lambda'-3\vep}{\ln a-\lambda'-\vep} \Big{\}}.$$
Then $E^s$, $E^{cs}$, $E^{u}$, $E^{cu}$  on $\Lambda_k$ are $a_1\alpha$-H\"older continuous  with H\"older constant $b_1e^{4k\vep}$, where $b_1$ is independent of $k$ and $n$.  Since  $E^c=E^{cs}\cap E^{cu}$,  we also get that  $E^c$ on $\Lambda_k$ is $a_1\alpha$-H\"older continuous with H\"older constant $b_1e^{4k\vep}$. 

Now we consider the H\"older continuity with respect to $<, >'$. 
\begin{Prop}\label{holder}
	$E^s, E^{cu}, E^u$ and  $E^{cs} $ on $\Lambda_k$ are $a_1\alpha$-H\"older continuous  with H\"older constant $b_2e^{6k\vep}$ for some constant $b_2$, with respect to $<,>'$. 
\end{Prop}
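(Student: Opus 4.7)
The plan is to transfer the already-established Hölder continuity of $E^s, E^{cs}, E^{cu}, E^u$ on $\Lambda_k$ (with respect to the background Riemannian norm $\|\cdot\|$, Hölder constant $b_1 e^{4k\vep}$) through the equivalence of norms (\ref{metric relation}) on $\Lambda_k$, which gives $\tfrac{1}{3}\|v\|\le \|v\|' \le C e^{\vep k}\|v\|$. Since the distribution as a set of linear subspaces is the same regardless of the choice of inner product, only the unit sphere used in the definition of $d(A,B)$ changes.

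First I would write the distance between subspaces $A,B$ in the primed metric as
\begin{equation*}
d'(A,B)=\max\Big\{\sup_{v\in A,\,\|v\|'=1} d'(v,B),\;\sup_{v\in B,\,\|v\|'=1}d'(v,A)\Big\},
\end{equation*}
where $d'(v,B)=\min_{w\in B}\|v-w\|'$. Fixing $v\in A$ with $\|v\|'=1$, the right-hand inequality in (\ref{metric relation}) yields $d'(v,B)\le Ce^{\vep k} d(v,B)$ by minimizing over the same $w\in B$. The left-hand inequality gives $\|v\|\le 3$, so $v/\|v\|$ is a $\|\cdot\|$-unit vector in $A$ and
\begin{equation*}
d(v,B)=\|v\|\,d(v/\|v\|,B)\le 3\, d(A,B).
\end{equation*}
Combining, $d'(A,B)\le 3Ce^{\vep k}\, d(A,B)$, and plugging in the Hölder bound established immediately before the proposition,
\begin{equation*}
d'(E^{*}(x),E^{*}(y))\le 3C\,e^{\vep k}\cdot b_1 e^{4k\vep}\|x-y\|^{a_1\alpha}=3Cb_1\, e^{5k\vep}\|x-y\|^{a_1\alpha},
\end{equation*}
for $*\in\{s,cs,cu,u\}$. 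Setting $b_2=3Cb_1$ (and absorbing the slack into the $e^{6k\vep}$ factor) gives the stated bound; the estimate for $E^c=E^{cs}\cap E^{cu}$ follows from those for $E^{cs}$ and $E^{cu}$.

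The argument is essentially a change-of-metric bookkeeping exercise, so there is no deep obstacle. The only subtle point — and the one I would be careful to spell out — is the two-step normalization: a $\|\cdot\|'$-unit vector is only known to have $\|\cdot\|$-norm at most $3$, so one must insert a factor of $\|v\|$ when reducing to a $\|\cdot\|$-unit vector before invoking the $\|\cdot\|$-Hölder bound. Everything else, including the fact that the ambient Euclidean distance $\|x-y\|$ used on the base does not depend on the choice of fibre metric, is routine.
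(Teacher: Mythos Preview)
Your argument is correct and is in fact cleaner than the paper's. Where you use the norm equivalence (\ref{metric relation}) directly to bound the subspace distance---getting $d'(A,B)\le 3Ce^{\vep k}\,d(A,B)$ in one stroke---the paper instead runs a trigonometric computation: it writes a $\|\cdot\|'$-unit vector $v\in E^s(y)$ as $\cos\gamma'\, e_s+\sin\gamma'\, e_{cu}$ in the frame $E^s(x)\oplus E^{cu}(x)$, compares the primed angle $\gamma'$ to the Riemannian angle $\gamma$ via an explicit $\tan\gamma$ estimate (using the lower bound $\sin\theta_0\ge e^{-k\vep}$ on the angle between the subbundles together with $\|e_{cu}\|\ge C^{-1}e^{-k\vep}$), and obtains $\gamma'\le C_1 e^{2k\vep}\gamma$. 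Your route sidesteps the angle bookkeeping entirely; the paper's route makes the geometric source of the extra $e^{k\vep}$ factors visible.

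One small point: the paper's final inequality is expressed with the base distance in the primed metric, i.e.\ $d'(E^*(x),E^*(y))\le b_2 e^{6k\vep}(\|x-y\|')^{a_1\alpha}$, whereas you leave $\|x-y\|$ in the ambient norm. Since $\|x-y\|\le 3\|x-y\|'$ this costs only a harmless constant, so your remark that this step is routine is justified---but if you want to match the paper's statement literally you should record that conversion explicitly.
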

\begin{proof}For $x, y\in \Lambda_k$, let $e_{s}\in E^{s}(x)$, $e_{cu}\in E^{cu}(x)$ and $v\in E^s(y)$ be the  unit vectors with respect to $<, >'$.   Denote by $\gamma'$ the angle between $e_s$ and $v$   with respect to $<, >'_x$ then  $$v=\cos\gamma' \cdot e_s+ \sin \gamma' \cdot  e_{cu}.$$  Denote by $\gamma$ the angle between $e_s$ and $v$   with respect to the Riemannian metric of $M$, then 
	\begin{eqnarray*} |\tan \gamma|&=& \frac{\|\sin \gamma' \sin \theta_0\cdot  e_{cu}\|}{\|\cos \gamma'\cdot  e_s\| +|\cos \theta_0|\|\sin \gamma'  \cdot e_{cu}\|}\\[2mm]
		&\geqslant& \frac{|\sin \gamma' | e^{-k\vep} C^{-1}e^{-k\vep}}{3+3},
	\end{eqnarray*}
	which implies $$\gamma'\leqslant C_1 e^{2k\vep} \gamma$$
	for some constant $C_1$.

	\vspace*{0pt}
	\begin{figure}[H]
		\begin{center}
			\includegraphics[width=0.85\textwidth]{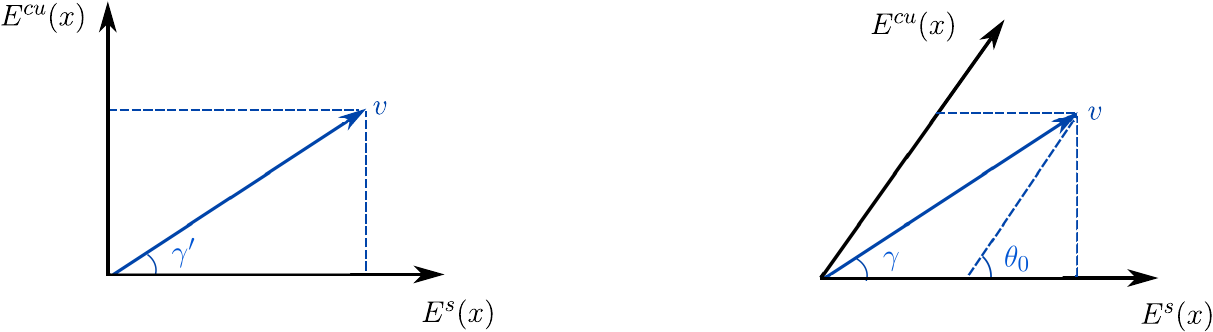}
		\end{center}
		\caption{}
	\end{figure}
	\vspace*{10pt}
	
	Note that 
	\begin{eqnarray*} \gamma&\leqslant & b_1 e^{4k\vep}\|x-y\|^{a_1\alpha}\leqslant b_1 e^{4k\vep}(3\|x-y\|')^{a_1\alpha}.
	\end{eqnarray*}
	For two subspaces  with  small angle,  any one of the distance, the angle, tangent and sine  are uniformly bounded  by any other.  Therefore, we get that $$d'(E^s(x), E^s(y))\leqslant b_2 e^{6k\vep}(\|x-y\|')^{a_1\alpha}$$
	for some constant $b_2$, where $d'$ is the distance of subspaces with respect to $<,>'$.  The  H\"older continuity of $E^{cu}, E^u$ and  $E^{cs} $ can be proved analogously.
\end{proof}

\subsection{Invariant foliations}

Note that  for  any $\eta>0$,  $ B(x, \eta)$ contains a ball centered at $x$ with radius $\frac13 \eta$  in the distance $d''$, which we denote by  $B''(x, \frac13  \eta)$.   By Proposition \ref{Pro}, it holds    uniform partial hyperbolicity   for $T_yM=E_y^{s''}\oplus  E_y^{c''}\oplus E_y^{u''}$ in $\|\cdot\|''$ for $y\in  B''(x,  \frac{1}{3}a_{\xi}\epsilon_0 e^{-k\epsilon})\subseteq  B(x,  a_{\xi}\epsilon_0 e^{-k\epsilon})$, $x\in \Lambda_k$.  
By taking  the exponential map at $x$ we can assume without loss
of generality that we are working in   $\mathbb{R}^{\mathsf{d}}$.    Let $\xi$ and $a_\xi$ be as in Proposition \ref{Pro}. For any $x\in \Lambda^*$, we can extend $f\mid_{B(x, a_{\xi}\epsilon_0 e^{-k\epsilon})}$ to a diffeomorphism $\tilde{f}_x: \mathbb{R}^{\mathsf d} \to \mathbb{R}^{\mathsf d} $ such that
\begin{itemize}
	\item $\tilde{f}_x(y)=f(y)$\quad for $y\in B(x,  a_{\xi}\epsilon_0 e^{-k\epsilon})$;\\
	\item  $\|D_y\tilde{f}_x-D_xf\|''_x\leq 2 a_{\xi}^{\alpha}$\quad for $y\in \mathbb{R}^{\mathsf d}$.                  
\end{itemize}
By taking $a_{\xi}$ smaller,  the properties  of  Proposition  \ref{Pro}  hold with respect to   $\tilde{f}_x$ for all $y\in \mathbb{R}^{\mathsf d}$.  Moreover, 
there exists $L>1$ such that 
$$\|D_y\tilde{f}_x^{\pm}\|''\le L,\quad \forall\,y\in \mathbb{R}^{\mathsf d},\,\,x\in \Lambda^*.$$

Let  $$\delta_k=((1-\varsigma)\xi(b_2e^{6k\vep}))^{-a_1\alpha}\,\,\ \text{for any}\,\,   k\in \mathbb{Z}_+.$$
For a  seqenece   $ \{a_n\}_{n\in\mathbb{Z}}  $ of positive  integers,  let   $ \{\cdots, [x_{-n}, f^{a_{-n}-1}(x_{-n})], $ $  \cdots, [x_0, f^{a_0-1}(x_0)], \cdots, [x_{n},  f^{a_n-1}(x_{n})], \cdots \}  \subset  \Lambda $   be   a sequence of orbit segments  which is a $ \{\delta_{k} \}_{k\in\mathbb{Z^+}}$ pseudo-orbit,  i.e., there exist two  sequences of positive integers $ \{s_n^{-}\}_{n\in\mathbb{Z}} $  and $\{s_n^{+}\}_{n\in\mathbb{Z}} $  such that
\begin{itemize}
	\item[(a)] $ x_n \in\Lambda^{-}_{s_n} $, \, $ f^{a_n}(x_n) \in\Lambda^{+}_{s_n} $, $ \forall n\in\mathbb{Z} $;\\
	\item[(b)] $ |s^+_n-s^{-}_{n+1}|\le 1 $,  $ \forall n\in\mathbb{Z} $;\\
	\item[(c)]$d(f^{a_n}(x_n),x_{n+1})< \delta_{s_n}$,  $\forall n\in\mathbb{Z} $.
\end{itemize}
Then by the H\"older property of Proposition \ref{holder}, 
$$d(E^*(f^{a_n}(x_{n}), E^*(x_{n+1})))<(1-\varsigma)\xi,\,\,\,*\in \{s, cs, cu, u\}.$$
Therefore, 
$$Q_{\|\|_{f^{a_n}(x_n)}''}(E^*_{f^{a_n}(x_n)}, \varsigma\xi)\subset Q_{\|\|_{x_{n+1}}''}(E^*_{x_{n+1}}, \xi),\,\,\,*\in \{s, cs, cu, u\}.$$

Denote $$\{z_i\}_{i=-\infty}^{+\infty}=\{\cdots, x_{-n},\cdots, f^{a_{-n}-1}(x_{-n}),\cdots, x_0,\cdots,f^{a_0-1}(x_0),\cdots, x_n, f^{a_n-1}(x_{n}),\cdots  \}$$ with $z_0=x_0$. 
Let $\Xi$ be the disjoint union given by $$\Xi=\ \coprod_{i=-\infty}^{+\infty} \{z_{i}\}\times \mathbb{R}^{\mathsf d}. $$   
Then $\tilde{f}=(\tilde{f}_{z_i})_{i=-\infty}^{+\infty}$  can be viewed as a map from $\Xi$ to itself by letting $ \tilde{f}(x,v)= \left(f(x),\tilde{f}_x(v)\right)$.

Note that the global splitting $\coprod_{i=-\infty}^{+\infty} \{z_{i}\}\times \mathbb{R}^{\mathsf d}=\coprod_{i=-\infty}^{+\infty} \{z_{i}\}\times (E^{s}_{z_i}\oplus \oplus E^c_{z_i}\oplus E^u_{z_i})$ is dominated with respect to $\tilde{f}$.  By \cite{HPS} $\textsection 5$ and ,  we can obtain
a family $ \{ \mathcal{Y}^{cs}_:\,x\in \Lambda^*\}$ of global invariant foliations $C^1$ submanifolds in $\mathbb{R}^{\mathsf d}$ which are $C^1$ graphs defined on $E^{cs}_x$ such that we have for all $x\in \Lambda^* $ :
\begin{eqnarray*} &&\{x\}\times \mathcal  Y^{cs}_x=\bigcap_{n=0}^{+\infty}\tilde{f}^{-n}\left(\{f^n(x)\}\times Q_{\|\|_x''}(E^{cs}_{f^n(x)}, \xi)\right),\\[2mm]
	&& \forall y\in \mathbb{R}^{\mathsf d},\ T_y\mathcal Y^{cs}_x \subset Q_{\|\|_x''}(E^{cs}_{x}, \xi).\end{eqnarray*}
In particular we get  $\tilde{f}^{\pm}(\{x\}\times \mathcal Y^{cs}_x) \subset\{f^{\pm}(x)\}\times  \mathcal Y^{cs}_{f^{\pm}(x)}$. Since we have $\tilde{f}\mid_{\{x\}\times B(x,a_{\xi}\gamma_{\kappa(x)})}=f\mid_{B(x,b_{\xi}\gamma_{\kappa(x)})}$, one concludes the proof by considering   $\mathcal W^{cs}_x=\mathcal Y_x\cap B(x,    a_{\xi}\gamma_{\kappa(x)})$ and taking much smaller $b_\xi$ than $a_{\xi}$.

\begin{Prop}\label{pro}
	Any  $  \{z_{i}\}\times \mathbb{R}^{\mathsf d}$ with $-\infty<i<+\infty$   is foliated by foliations  $ \mathcal{W}^s_{z_i} $,   $ \mathcal{W}^{cs}_{z_i} $,    $ \mathcal{W}^{cu}_{z_i} $,    $ \mathcal{W}^{c}_{z_i}$,     and   $ \mathcal{W}^u_{z_i} $ with  the following properties:     for  $*\in \{s, cs, cu, c, u\}$ and   for each $\tilde{y}=\{z_{i}\}\times \{y\} \in \{z_{i}\}\times \mathbb{R}^{\mathsf d}$, 
	\begin{itemize}
		\item[(i)] almost tangency:   the leaf $ \mathcal{W}^*_{z_i}(y)$ is $C^1$, and the tangent space  $T_{y}   \mathcal{W}^*_{z_i}(z)$ lies in a cone of radius $\epsilon$  about $E^*_{z_i}$.\\
		\item [(ii)] invariance:    $ f_{z_i}( \mathcal{W}^*_{z_i}(y)) = \mathcal{W}^*_{z_{i+1}}(f_{z_i}(y)). $\\
		\item [(iii)] exponential growth bounds:   
		
		\linespread{1.4} \selectfont		
		\begin{itemize}  
			
			\item [(a)] for any $ y_1, y_2\in \mathcal{W}^s_{z_i}(y) $,  $ d''(f_{z_i}(y_1),  f_{z_i}(y_2))\le e^{-\lambda_3} d''(y_1, y_2) $;
			
			\item [(b)]for any $ y_1, y_2\in \mathcal{W}^c_{z_i}(y) $, $ e^{-\lambda'_3} d''(y_1, y_2)\le  d''(f_{z_i}(y_1),  f_{z_i}(y_2))\le e^{\mu'_3} d''(y_1, y_2) $;
			
			\item [(c)]for any $ y_1, y_2\in \mathcal{W}^u_{z_i}(y) $,  $ d''(f_{z_i}(y_1),  f_{z_i}(y_2))\ge e^{\mu_3} d''(y_1, y_2)$.
		\end{itemize}
		
		\item [(iv)]	coherence:   $ \mathcal{W}^{s}_{z_i}$ and   $ \mathcal{W}^{c}_{z_i}$   subfoliate  $ \mathcal{W}^{cs}_{z_i}$,  and $ \mathcal{W}^{u}_{z_i}$ and   $ \mathcal{W}^{c}_{z_i}$   subfoliate  $ \mathcal{W}^{cu}_{z_i}$.\\
		
		\item [(v)] 
		regularity:  the foliations  $ \mathcal{W}^{*}$ and their tangent distributions are uniformly H$ \ddot{o} $lder continuous.

	\end{itemize}
	
\end{Prop}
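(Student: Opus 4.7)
\medskip

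\noindent\textbf{Proof proposal.}
The plan is to apply the Hirsch--Pugh--Shub graph transform machinery (as already invoked in the excerpt for $\mathcal{W}^{cs}$) to the fiber--wise map $\tilde{f}=(\tilde{f}_{z_i})_{i\in\mathbb{Z}}$ on $\Xi=\coprod_{i\in\mathbb{Z}}\{z_i\}\times\mathbb{R}^{\mathsf d}$, and then obtain the center foliation by intersection. By the construction of $\tilde{f}_{z_i}$ and by Proposition \ref{Pro}(ii), the estimates $e^{-\lambda_3},e^{-\lambda'_3},e^{\mu'_3},e^{\mu_3}$ and the invariance of the cones $Q_{\|\cdot\|''_{z_i}}(E^{*}_{z_i},\xi)$ for $*\in\{s,cs,cu,u\}$ hold globally on $\mathbb{R}^{\mathsf d}$ with respect to each $\tilde{f}_{z_i}$. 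Since the gluing estimate $d(E^*(f^{a_n}(x_n)),E^*(x_{n+1}))<(1-\varsigma)\xi$ proved just above the statement guarantees that the image cone at $f^{a_n}(x_n)$ is contained in the cone at the next base point $x_{n+1}$, these cone inclusions chain together along the entire pseudo--orbit. Hence the hypotheses of the standard graph--transform theorem for dominated splittings over a non--autonomous sequence are satisfied uniformly.

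\medskip

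The concrete construction is the following. For $*\in\{cs,s\}$, define
\begin{equation*}
\{z_i\}\times \mathcal Y^{*}_{z_i}=\bigcap_{n\ge 0}\tilde{f}^{-n}\!\left(\{z_{i+n}\}\times Q_{\|\cdot\|''_{z_{i+n}}}(E^{*}_{z_{i+n}},\xi)\right),
\end{equation*}
and symmetrically for $*\in\{cu,u\}$ using $\tilde{f}^{-1}$ and forward cones. The $E^{cs}$ construction is exactly the one sketched in the excerpt; the $E^s$ construction is identical but uses the stronger contraction rate $e^{-\lambda_3}$ (valid on the whole $E^s$--cone) in place of the mixed $cs$ bound, so that the graph transform is a contraction in the same Banach space of Lipschitz sections over $E^s_{z_i}$. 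The same remarks, applied to $\tilde{f}^{-1}$, yield $\mathcal{W}^{cu}$ and $\mathcal{W}^u$. Properties (i), (ii) and (iii) then follow immediately: (i) from the cone width $\xi$ used in the graph transform, (ii) from the defining intersection, and (iii) from Proposition \ref{Pro}(ii) applied on the corresponding leaf, since the tangent vectors to the leaf lie in the appropriate narrow cone and inherit the contraction/expansion bounds $e^{-\lambda_3},e^{-\lambda'_3},e^{\mu'_3},e^{\mu_3}$. I then set
\begin{equation*}
\mathcal{W}^{c}_{z_i}(y):=\mathcal{W}^{cs}_{z_i}(y)\cap \mathcal{W}^{cu}_{z_i}(y),
\end{equation*}
which makes coherence (iv) automatic. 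Because $\mathcal{W}^{cs}_{z_i}$ and $\mathcal{W}^{cu}_{z_i}$ are $C^1$ graphs whose tangent spaces lie in complementary narrow cones around $E^{cs}_{z_i}$ and $E^{cu}_{z_i}$, the intersection is transverse and produces a $C^1$ submanifold tangent to a cone about $E^{c}_{z_i}$; its growth bound (iii)(b) comes from the fact that its tangent vectors sit inside $Q_{\|\cdot\|''}(E^{c}_{z_i},\xi)$, on which Proposition \ref{Pro}(ii) gives exactly the two--sided estimate.

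\medskip

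Finally, for (v), I would appeal to the standard H\"older regularity of invariant foliations for dominated splittings: the same kind of bounded--distortion argument used in Subsection 2.1 (via Lemma 4.14 and Lemma 4.16 of \cite{BP}) applies to the leaves and their tangent distributions, the H\"older exponent being determined by the gap between the exponents $\lambda_3,\lambda'_3,\mu'_3,\mu_3$, exactly as in Proposition \ref{holder}. The technical point that the H\"older constants are uniform in $i$ is ensured by the uniform bound $\|D_y\tilde{f}_{z_i}^{\pm}\|''\le L$ together with the uniform cone width $\xi$ inherited from the gluing step. The main obstacle I anticipate is the center foliation $\mathcal{W}^c$: unlike $\mathcal{W}^s$ and $\mathcal{W}^u$, it is not produced by a genuine contraction of a graph--transform but only by a transverse intersection, so verifying that leaves are globally well--defined $C^1$ submanifolds, that they fit together into a continuous foliation, and that the two--sided bound in (iii)(b) really captures intrinsic distance on the leaf (rather than just ambient $d''$) requires one to control the angle between $E^{cs}$ and $E^{cu}$ uniformly along the pseudo--orbit, which is where the H\"older estimates from Proposition \ref{holder} will be used a second time.
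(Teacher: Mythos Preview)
Your proposal is correct and follows the same route as the paper. In fact the paper gives no separate proof of this proposition: it simply cites \cite{HPS}~\S5, sketches the graph--transform construction of $\mathcal{Y}^{cs}$ (exactly the intersection of pulled--back cones you wrote down), and then states the proposition; your write--up fills in precisely the details the paper leaves implicit, including the definition of $\mathcal{W}^c$ as $\mathcal{W}^{cs}\cap\mathcal{W}^{cu}$ and the appeal to the uniform bound $\|D_y\tilde f_{z_i}^{\pm}\|''\le L$ for the H\"older regularity.
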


For $y, z \in \mathbb{R}^{\mathsf d}$,  write
\begin{itemize}\item $\{[x, y]_{cs, u}\} =  \mathcal{W}^{cs}_{z_i}(x) \cap  \mathcal{W}^{u}_{z_i}(y)$,\\
	\item $ \{[x, y]_{s, cu}\} =  \mathcal{W}^{s}_{z_i}(x) \cap  \mathcal{W}^{cu}_{z_i}(y)$.
\end{itemize}
There  exists $b\ge 1$ such that 
\begin{eqnarray*}&d_{ \mathcal{W}^{cs}_{z_i}(x)}(x, [x, y]_{cs, u})\le b d(x, y),&\quad d_{ \mathcal{W}^{u}_{z_i}(x)}(y, [x, y]_{cs, u})\le b d(x, y),\\[2mm]
	&d_{ \mathcal{W}^{s}_{z_i}(x)}(x, [x, y]_{s, cu})\le b d(x, y),&\quad d_{ \mathcal{W}^{cu}_{z_i}(x)}(y, [x, y]_{s, cu})\le b d(x, y).
\end{eqnarray*}
By letting $\epsilon$ small, we can let $b$ close to 1 such that 
$$ \frac{b}{2} (1+e^{- \min\{\lambda_3, \,\mu_3\}+\epsilon}) <1.$$
By letting $\epsilon$ small, we can let $b$ close to 1 such that 
$$ \frac{b}{2} (1+e^{- \min\{\lambda_3, \,\mu_3\}+\theta^{-1}\epsilon}) <1.$$

By the  regularity property (v),  $\mathcal{W}_{z_i}^{*}$ is $\theta$-H\"older continuous for some $\theta\in (0,1)$  and the  constant $\tilde{C}$ can be chosen uniformly for uniformly bounded regions.

\section{Quasi-shadowing property }
\begin{proof}[\bf Proof of Theorem \ref{thA}]

	For any $z_i$,   $-\infty<i<+\infty$, we take  cylinders
	\begin{eqnarray*}C^{cu}(z_i, \gamma e^{-\theta^{-1}\tau(z_i)\epsilon})&=&\bigcup_{x\in \mathcal{W}^s_{z_i}(z_i, \gamma e^{-\theta^{-1}\tau(z_i)\epsilon})} \mathcal{W}_{z_i}^{cu}(x),\\[2mm]
		C^{cs}(z_i, \gamma e^{-\theta^{-1}\tau(z_i)\epsilon})&=&\bigcup_{x\in \mathcal{W}^u_{z_i}(z_i, \gamma e^{-\theta^{-1}\tau(z_i)\epsilon})} \mathcal{W}_{z_i}^{cs}(x),
	\end{eqnarray*}
	where $\gamma$  will be assigned   later.  Note that $$\tilde{f}(C^{cu}(z_i, \gamma e^{-\theta^{-1}\tau(z_i)\epsilon}))\subset C^{cu}(\tilde{f}(z_i),  e^{-\lambda_3}\gamma e^{-\theta^{-1}\tau(z_i)\epsilon}).$$
	Take $\delta_i= \frac{\gamma}{2} (1-e^{-\lambda_3+\theta^{-1}\epsilon}) C^{-1}e^{-(1+\theta^{-1})\tau(z_{i})\epsilon}$.  Since $d(\tilde{f}(z_i), z_{i+1})<\delta_{i+1}=  \frac{\gamma}{2} (1-e^{-\lambda_3+\theta^{-1}\epsilon}) C^{-1}e^{-(1+\theta^{-1})\tau(z_{i+1})\epsilon}$,  $$d''(\tilde{f}(z_i), z_{i+1})< \frac{1}{2} (1-e^{-\lambda_3+\theta^{-1}\epsilon}) \gamma e^{-\theta^{-1}\tau(z_{i+1})\epsilon}.$$ 
	For any $y\in \mathcal{W}^s_{f(z_i)}(e^{-\lambda_3}\gamma e^{-\theta^{-1}\tau(z_i)\epsilon})$,   we have 
	\begin{eqnarray*}d''(y, z_{i+1})&<&e^{-\lambda_3}\gamma e^{-\theta^{-1}\tau(z_i)\epsilon}+ \frac{1}{2} (1-e^{-\lambda_3+\theta^{-1}\epsilon}) \gamma e^{-\theta^{-1}\tau(z_{i+1})\epsilon},\\[2mm]
		&\le &e^{-\lambda_3+\theta^{-1}\epsilon}\gamma e^{-\theta^{-1}\tau(z_{i+1})\epsilon}+ \frac{1}{2} (1-e^{-\lambda_3+\theta^{-1}\epsilon}) \gamma e^{-\theta^{-1}\tau(z_{i+1})\epsilon}\\[2mm]
		&=&  \frac{1}{2} (1+e^{-\lambda_3+\theta^{-1}\epsilon}) \gamma e^{-\theta^{-1}\tau(z_{i+1})\epsilon}. \end{eqnarray*}
	Therefore,  $$d''_{\mathcal{W}^s(z_{i+1})}( z_{i+1}, [y, z_{i+1}]_{cu, s})< \frac{b}{2} (1+e^{-\lambda_3+\theta^{-1}\epsilon}) \gamma e^{-\theta^{-1}\tau(z_{i+1})\epsilon} \le \gamma e^{-\theta^{-1}\tau(z_{i+1})\epsilon},$$  which implies
	$$\tilde{f}(C^{cu}(z_i, \gamma e^{-\theta^{-1}\tau(z_i)\epsilon}))\subset C^{cu}(z_{i+1},  \gamma e^{-\theta^{-1}\tau(z_{i+1})\epsilon}).$$
	In the same manner, one can get that 
	$$\tilde{f}^{-1}(C^{cs}(z_i, \gamma e^{-\theta^{-1}\tau(z_i)\epsilon}))\subset C^{cs}(z_{i-1},  \gamma e^{-\theta^{-1}\tau(z_{i-1})\epsilon}).$$
	Denote 
	\begin{eqnarray*}C^{cu}_{\infty}(z_i) &=& \bigcap_{j=0}^{+\infty}\tilde{f}^j(C^{cu}(z_{i-j}, \gamma e^{-\theta^{-1}\tau(z_{i-j})\epsilon})),\\[2mm]
		C^{cs}_{\infty}(z_i) &=& \bigcap_{j=0}^{+\infty}\tilde{f}^{-j}(C^{cs}(z_{i+j}, \gamma e^{-\theta^{-1}\tau(z_{i+j})\epsilon})),
	\end{eqnarray*}
	which  consist of $cu$-leaves and $cs$-leaves, respectively.  It holds that 
	\begin{eqnarray*}\tilde{f}(C^{cu}_{\infty}(z_i)) &=& C^{cu}_{\infty}(z_{i+1}),\\[2mm]
		\tilde{f}^{-1}(C^{cs}_{\infty}(z_i)) &=& C^{cs}_{\infty}(z_{i-1}).
	\end{eqnarray*}
	Take  one $cu$-leaf $F_{z_0}\subset C^{cu}_{\infty}(z_0)$ and one $cs$-leaf $G_{z_0}\subset C^{cs}_{\infty}(z_0)$.  Denote 
	$$F_{z_i}=\tilde{f}^i(F_{z_0})\subset C^{cu}_{\infty}(z_i)\quad \text{and}\quad G_{z_i}=\tilde{f}^i(G_{z_0})\subset C^{cs}_{\infty}(z_i),\quad \forall\,i\in \mathbb{Z}.$$
	Then $F_{z_i}$ and $G_{z_i}$ are $cu$-leaf and $cs$-leaf, respectvely. 
	
	Denote $$ \{p(z_i)\}=F_{z_i}\cap \mathcal{W}_{z_i}^{s}(z_i),\quad \{q(z_i)\}=G_{z_i}\cap  \mathcal{W}_{z_i}^{u}(z_i),\quad \{t(z_i)\}=G_{z_i}\cap \mathcal{W}_{z_i}^{u}(p(z_i)).$$
	It holds that 
	$$t(z_i)\in F_{z_i}\cap G_{z_i}= \mathcal{W}_{z_i}^{c}(t(z_i)).$$
	Moreover, by the H\"older continuity of $\mathcal{W}_{z_i}^{cs}$,   
	\begin{eqnarray*}
		d_{\mathcal{W}_{z_i}^u(p(z_i))}(p(z_i), t(z_i))\le \tilde{C} (d_{\mathcal{W}_{z_i}^u(z_i)}(z_i, q(z_i)))^{\theta}\le  \tilde{C} (\gamma e^{-\theta^{-1}\tau(z_{i})\epsilon})^{\theta}= \tilde{C} \gamma^{\theta} e^{-\tau(z_{i})\epsilon}.
	\end{eqnarray*}
	And \begin{eqnarray*}
		d_{\mathcal{W}_{z_i}^s(z_i)}(z_i, p(z_i))\le    \gamma e^{-\theta^{-1}\tau(z_{i})\epsilon}.
	\end{eqnarray*}
	Thus
	\begin{eqnarray}\label{center-o}
	d''(z_i, t(z_i))\le \tilde{C} \gamma^{\theta} e^{-\tau(z_{i})\epsilon}+  \gamma e^{-\theta^{-1}\tau(z_{i})\epsilon}\le C_1 \gamma^{\theta} e^{-\tau(z_{i})\epsilon},
	\end{eqnarray}
	for some constant $C_1>1$.

	Now correspond to the  pseudo-orbit $$ \{\cdots, [x_{-n}, f^{a_{-n}-1}(x_{-n})],   \cdots, [x_0, f^{a_0-1}(x_0)], \cdots, [x_{n},  f^{a_n-1}(x_{n})], \cdots \}  \subset  \Lambda.$$  
	(I)  the distance along orbit segment $[x_{n},  f^{a_n-1}(x_{n})]$ for each $n$.   \\
	
	By the invariance of fake foliations,  $$\tilde{f}^{a_n-1}(t(x_n))=t(\tilde{f}^{a_n-1}(x_n))\in  F_{\tilde{f}^{a_n-1}(x_n)}\cap G_{\tilde{f}^{a_n-1}(x_n)}=  \mathcal{W}_{t(\tilde{f}^{a_n-1}(x_n))}^{c}(t(\tilde{f}^{a_n-1}(x_n))).$$

	By the H\"older continuity of $\mathcal{W}_{\tilde{f}^{a_n-1}(x_n)}^{cs}$,   
	\begin{eqnarray*}&&d_{\mathcal{W}_{\tilde{f}^{a_n-1}(x_n)}^{u}(p(\tilde{f}^{a_n-1}(x_n))}(p(\tilde{f}^{a_n-1}(x_n)), t(f\tilde{f}^{a_n-1}(x_n)))\\ [2mm]&\le&  \tilde{C} (d_{ \mathcal{W}^u_{\tilde{f}^{a_n-1}(x_n)}(\tilde{f}^{a_n-1}(x_n))}(f^{a_n-1}(x_n),  q(\tilde{f}^{a_n-1}(x_n)))^{\theta}\\[2mm] &\le&  \tilde{C} (\gamma e^{-\theta^{-1}\tau(\tilde{f}^{a_n-1}(x_n))\epsilon})^{\theta}\le C_1 \gamma^{\theta}e^{-\tau(\tilde{f}^{a_n-1}(x_n))\epsilon}.
	\end{eqnarray*}
	Moreover, \begin{eqnarray*}&&d_{\mathcal{W}_{\tilde{f}^{a_n}(x_n)}^{u}(p(\tilde{f}^{a_n}(x_n))}(p(\tilde{f}^{a_n}(x_n)), t(\tilde{f}^{a_n}(x_n)))\\ [2mm]&\le&  d_{\mathcal{W}_{\tilde{f}^{a_n-1}(x_n)}^{u}(p(\tilde{f}^{a_n-1}(x_n))}(p(\tilde{f}^{a_n-1}(x_n)), t(\tilde{f}^{a_n-1}(x_n))) L\\[2mm] &\le& C_1 \gamma^{\theta}e^{\epsilon}e^{-\tau(\tilde{f}^{a_n}(x_n))\epsilon}L.
	\end{eqnarray*}
	
	Note that $p(\tilde{f}^{a_n-1}(x_n))=\tilde{f}^{a_n-1}(p(x_n))$ and $t(\tilde{f}^{a_n-1}(x_n))=\tilde{f}^{a_n-1}(t(x_n))$.  It follows that 
	for $0\le j\le a_n-1 $, 
	\begin{eqnarray*}&&d_{\mathcal{W}_{\tilde{f}^j(x_{n})}^{u}(\tilde{f}^{j}(p(x_n)))}(\tilde{f}^{j}(p(x_n)),  \tilde{f}^{j}(t(x_n))) \\[2mm] &\le&  d_{\mathcal{W}_{\tilde{f}^{a_n-1}(x_n)}^{u}(p(\tilde{f}^{a_n-1}(x_n))}(p(\tilde{f}^{a_n-1}(x_n)), t(\tilde{f}^{a_n-1}(x_n)))\cdot  e^{-\mu_3(a_n-1-j)}\\[2mm]
		&\le & C_1 \gamma^{\theta}e^{-\tau(\tilde{f}^{a_n-1}(x_n))\epsilon}e^{-\mu_3(a_n-1-j)}\le C_1 \gamma^{\theta}e^{-\tau(\tilde{f}^{j}(x_n))\epsilon}.
	\end{eqnarray*}
	Besides,  we have 
	\begin{eqnarray*}&&d_{\mathcal{W}_{\tilde{f}^j(x_{n})}^{s}(\tilde{f}^{j}(x_n))}(\tilde{f}^j(x_n), \tilde{f}^{j}(p(x_n))) \\[2mm] &\le&  d_{\mathcal{W}_{x_{n}}^{s}(x_n)}(x_n, p(x_n))\cdot  e^{-\lambda_3 j}\\[2mm]
		&\le & \gamma e^{-\theta^{-1}\tau(x_n)\epsilon} e^{-\lambda_3 j}\le C_1\gamma^{\theta} e^{-\tau(\tilde{f}^j(x_n))\epsilon}.
	\end{eqnarray*}
	Thus, 
	\begin{eqnarray*}&&d''(\tilde{f}^j(x_n),  \tilde{f}^{j}(t(x_n))) \\[2mm]
		&\le& d_{\mathcal{W}_{\tilde{f}^j(x_{n})}^{s}(\tilde{f}^{j}(x_n))}(\tilde{f}^j(x_n), \tilde{f}^{j}(p(x_n)))+d_{\mathcal{W}_{\tilde{f}^j(x_{n})}^{u}(\tilde{f}^{j}(p(x_n)))}(\tilde{f}^{j}(p(x_n)),  \tilde{f}^{j}(t(x_n)))\\[2mm]
		&\le & 2C_1\gamma^{\theta} e^{-\tau(\tilde{f}^j(x_n))\epsilon}. 
	\end{eqnarray*}
	\\
	(II) the distance along center leaf. \\
	It holds that  
	\begin{eqnarray*}&&d''(t(x_{n+1}),  \tilde{f}^{a_n}(t(x_n))) \\[2mm]
		&\le&d''(t(x_{n+1}),  x_{n+1})+ d''(\tilde{f}^{a_n}(x_{n}), x_{n+1})+d''(\tilde{f}^{a_n}(x_{n}),  \tilde{f}^{a_n}(t(x_{n})).
	\end{eqnarray*}
	Note that 
	\begin{eqnarray*}
		d''(t(x_{n+1}),  x_{n+1})
		&\le & C_1 \gamma^{\theta} e^{-\tau(x_{n+1})\epsilon},
	\end{eqnarray*}
	\begin{eqnarray*}d''(f^{a_n}(x_{n}), x_{n+1})
		&\le &  \gamma C_0e^{-(1+\theta^{-1})\tau(x_{n+1})\epsilon},
	\end{eqnarray*}
	and \begin{eqnarray*}d''(\tilde{f}^{a_n}(x_{n}),  \tilde{f}^{a_n}(t(x_{n}))
		&\le & d''(\tilde{f}^{a_n-1}(x_{n}),  \tilde{f}^{a_n-1}(t(x_{n}))) L\\[2mm]
		&\le &  2C_1\gamma^{\theta} e^{-\tau(\tilde{f}^{a_n-1}(x_n))\epsilon}L\\[2mm]
		&\le &  2C_1\gamma^{\theta}e^{\vep} e^{-\tau(\tilde{f}^{a_n}(x_n))\epsilon}L\\[2mm]
		&\le &  2C_1\gamma^{\theta}e^{2\vep} e^{-\tau(x_{n+1})\epsilon}L.
	\end{eqnarray*}
	Therefore, 
	\begin{eqnarray*} d''(t(x_{n+1}),  \tilde{f}^{a_n}(t(x_n))) \le C_2 \gamma^{\theta} e^{-\tau(x_{n+1})\epsilon},
	\end{eqnarray*}
	for some constant $C_2>2LC_1>1$.  For any $\eta>0$,  let $\gamma=(\frac{\epsilon_0\eta}{3C_2})^{\frac{1}{\theta}} $, we can get the estimate in Theorem \ref{thA}. 
	Moreover, there exists small  $\eta_0>0$ such that for any $\gamma\in (0,(\frac{\epsilon_0\eta_0}{C_2})^{\frac{1}{\theta}} )$, the above estimates are done in  the corresponding $B''(z,  \frac{1}{3}a_{\xi}\epsilon_0 e^{-\tau(z)\epsilon})$ thus in  $B(z, a_{\xi}\epsilon_0 e^{-\tau(z)\epsilon})$ for any $z=z_i$, where $\tilde{f}=f$.

\end{proof}

\section{the growth of  quasi-periodic orbits }
Firstly, we introduce the following Katok.s definition of metric entropy (\cite{MR0573822}).
For every $0<\delta<1 $ and  an ergodic $ f$-invariant   measure  $ m $, denote   $
d_n^f(x,y)=\underset{0\le i\le n-1}{\max}d(f^i(x),f^i(y))$ and denote by $ N_f(n,\gamma,\delta) $ the minimal number of $ \gamma $-balls in the $ d_n^f $-metric which cover the set of measure more than or equal to $ 1-\delta $. Then 

\begin{equation*}
h_m(f)=\lim_{\gamma\to 0}\varliminf_{n\to \infty}\frac{\ln N_f(n,\gamma,\delta)}{n}=\lim_{\gamma\to 0}\varlimsup_{n\to \infty}\frac{\ln N_f(n,\gamma,\delta)}{n}.
\end{equation*}

Recall   the set of quasi-periodic points:  $ P'_n(f)=\{x:\;  f^n(x)\in\mathcal{W}^c(x)\} $.

\begin{proof}[\bf Proof of Theorem \ref{thC}]  
	Fix a positive integer $ k $. Then for any small positive number $ \gamma $, any positive number $ l $ and any positive integer $ n $, we shall construct a finite set $ K_n=K_n(\gamma,l) $ satifying the following properties (for some $ n $ the set $ K_n(\gamma,l) $ may be empty):
	\begin{itemize}
		\item[(i)]$K_n\subset  \Lambda_k $;
		\item[(ii)]if $ x $, $ y\in K_n $ and $ x\not =y $, then $ d^f_n(x,y)>l^{-1} $;
		\item[(iii)]for every $ x\in K_n $ there exists a number $ m(x):n\le m(x) \le (1+\gamma )n $ such that $ f^{m(x)}(x)\in\Lambda_k $ and 
		\begin{equation*}
		d(x,f^{m(x)}(x))\le \beta(k,(3l)^{-1});
		\end{equation*}
		\item[(iv)]for every $ \gamma>0 $, 	denote by $ \#K_n(\gamma,l) $  the cardinality of $ K_n(\gamma,l) $,
		\begin{equation*}
		\lim_{l\to\infty}\varliminf_{n\to\infty}\frac{\ln \# \;K_n(\gamma,l)}{n}\ge h_m(f).
		\end{equation*}
		
	\end{itemize}

	To construct the set $K_n  $, we choose a finite partition $ \xi $ satisfying 
	\begin{equation*}
	diam \; \xi <\beta(k,1/(3l))
	\end{equation*}
	and
	\begin{equation*}
	\xi >\{\Lambda_k,M\setminus \Lambda_k\},
	\end{equation*}
	which means that every element of $ \xi $ either belongs to the set $ \Lambda_k $ or is disjoint from this set.
	
	Let $ \Lambda_{k,n} =\{x\in\Lambda_k:\exists\; m\in [n,(1+\gamma)n]   $ such that the points $ x $ and $ f^m(x) $ belong to the same element of $ \xi \}$. Define $ K_n $ as the subset of the set $ \Lambda_{k,n} $ with the maximal cardinality satisfying the property (ii). The properties (i) and (iii) are true by definition. 
	
	By the property (ii), we know that the union of $ l^{-1} $-balls in the  $ d^f_n $-metric around points of $ K_n $ covers the set $ \Lambda_{k,n} $.  Then we have 
	
	\begin{equation*}
	\#K_n(\gamma,l)\ge   N_f(n,l^{-1},1-m(\Lambda_{k,n})) .
	\end{equation*}

	\begin{Lemma}\label{lem3}	
		\begin{equation*}
		\lim_{n\to \infty} m(\Lambda_{k,n})=m(\Lambda_k).
		\end{equation*}
		
	\end{Lemma}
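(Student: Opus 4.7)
The plan is to prove the stronger pointwise statement that for $m$-almost every $x\in\Lambda_k$ one has $x\in\Lambda_{k,n}$ for all sufficiently large $n$, and then deduce the measure convergence by bounded convergence applied to the nonnegative function $\chi_{\Lambda_k}-\chi_{\Lambda_{k,n}}$, using that $\Lambda_{k,n}\subseteq\Lambda_k$ gives a uniform bound.

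First I would exploit the combinatorial structure of $\xi$: since $\xi$ refines $\{\Lambda_k,M\setminus\Lambda_k\}$, the set $\Lambda_k$ is the disjoint union of those atoms $A\in\xi$ that are contained in $\Lambda_k$. Discarding the finitely many such atoms of zero $m$-measure (whose union is null), it suffices to establish the pointwise claim for $x$ lying in an atom $A\subseteq\Lambda_k$ with $m(A)>0$.

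For such an atom, the Birkhoff ergodic theorem together with the ergodicity of $m$ furnishes a full-measure subset of $A$ on which
$$\frac{1}{N}\sum_{i=0}^{N-1}\chi_A(f^i(x))\longrightarrow m(A)>0.$$
Writing $N_A(n,x)=\#\{0\le i<n:\ f^i(x)\in A\}$, this yields $N_A((1+\gamma)n,x)-N_A(n,x)\sim \gamma n\,m(A)\to+\infty$, so for all sufficiently large $n$ there exists $i\in[n,(1+\gamma)n]$ with $f^i(x)\in A$; then $x$ and $f^i(x)$ lie in the same atom of $\xi$ and $x\in\Lambda_{k,n}$. Taking the finite union of the exceptional null sets over the positive-measure atoms $A\subseteq\Lambda_k$ yields the desired pointwise convergence $\chi_{\Lambda_{k,n}}(x)\to\chi_{\Lambda_k}(x)$ for $m$-a.e.\ $x$, and bounded convergence concludes. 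I do not foresee a genuine obstacle; the only point requiring care is the initial discarding of zero-measure atoms so that Birkhoff delivers a strictly positive return frequency, guaranteeing that the window $[n,(1+\gamma)n]$ of length $\gamma n$ contains a return for large $n$.
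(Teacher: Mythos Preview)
Your proposal is correct and follows essentially the same approach as the paper's proof. Both arguments fix an atom $A\in\xi$ with $A\subseteq\Lambda_k$ and $m(A)>0$, apply Birkhoff's ergodic theorem to $\chi_A$, and deduce that for $m$-a.e.\ $x\in A$ the window $[n,(1+\gamma)n]$ eventually contains a return to $A$; the paper packages this via an explicit auxiliary set $A_{n,\gamma}$ defined by Birkhoff-sum cutoffs $nm(A)(1+\gamma/3)$ and $nm(A)(1+2\gamma/3)$, whereas you argue directly from $N_A((1+\gamma)n,x)-N_A(n,x)\sim\gamma n\,m(A)\to\infty$, but the content is identical.
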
	
	
	\begin{proof}
		We fix an elemnent $ A\in\xi $ which is a subset of the set $ \Lambda_{k} $ and set
		\begin{equation*}
		A_{n,\gamma}=\{x\in A: \,  \sum_{k=0}^{n-1}\;\chi_A (f^k(x))<nm(A)(1+\frac{\gamma}{3}),\;\sum_{k=0}^{[n(1+\gamma)]}\chi_A (f^k(x))>nm(A)(1+\frac{2\gamma}{3})\},
		\end{equation*}
		where $ \chi_D $ is a characteristic function of a set $ D $.
		
		Obviously $ A_{n,\gamma}\subset \Lambda_{k,n}\cap A $.  By the ergodic theorem we have $ m(A\setminus A_{n,\gamma})\to 0 $, as $ n\to\infty $. Applying these arguments to every element $ A\in\xi $ belonging to $ \Lambda_{k} $ we obtain that 
		\begin{equation*}\lim_{n\to \infty} m(\Lambda_{k,n})=m(\Lambda_k).
		\end{equation*}
		
	\end{proof}
	
	By Lemma \ref{lem3}, we can get that for every $ \delta>1-m(\Lambda_{k}) $, we have
	\begin{equation}\label{eq24}
	\varliminf_{n\to \infty}\frac{\ln \#K_n}{n}\ge \varliminf_{n\to \infty}\frac{\ln N_f(n,l^{-1},\delta)}{n}.
	\end{equation}
	
	By Katok.s definition of metric entropy, we know that the property (iv) is true.
	
	From Theorem \ref{thB}, for every point $ x\in K_n $ we can 
	find a quasi-periodic point $ z(x) $ with  period $ m(x) $ satisfying $ d(f^i(x),f^i(z(x)))<(3l)^{-1}, \;i=0,1,\cdots,m(x)-1$.
	If $ x,y\in K_n $ and $ x\not =y $ then
	
	\begin{equation*}\begin{aligned}
	&\mathop{\max}_{0\le i\le n-1}d(f^i(z(x)),f^i(z(y)))\\[2mm]
	\ge&\mathop{\max}_{0\le i\le n-1} d(f^i(x),f^i(y))-\mathop{\max}_{0\le i\le n-1}d(f^i(x),f^i(z(x)))-\mathop{\max}_{0\le i\le n-1}d(f^i(y),f^i(z(y)))\\[2mm]
	\ge&\mathop{\max}_{0\le i\le n-1} d(f^i(x),f^i(y))-\mathop{\max}_{0\le i\le m(x)-1}d(f^i(x),f^i(z(x)))-\mathop{\max}_{0\le i\le m(x)-1}d(f^i(y),f^i(z(y)))\\[2mm]
	\ge&l^{-1}-(3l)^{-1}-(3l)^{-1}\\[2mm]
	\ge&(3l)^{-1}.
	\end{aligned}
	\end{equation*}
	So that  $ z(x)$ and  $z(y) $ are different. Consequently, we get
	
	\begin{equation*}
	\sum_{m=n}^{[(1+\gamma)n]}\#P'_{m,1}(f, (3l)^{-1})\ge \# K_n(\gamma,n)
	\end{equation*}
	and
	\begin{equation*}
	\mathop{\max}\limits_{n\le m\le(1+\gamma)n }\#P'_{m,1}(f, (3l)^{-1})\ge \frac{\#K_n(\gamma,l)}{\gamma n+1}.
	\end{equation*}
	
	Thus, we can find a sequence of integers $ \{m_n \}$ where $ n\le m_n \le (1+\gamma )n\ $ such that
	\begin{equation*}
	\begin{aligned}
	\varliminf_{n\to \infty} \frac{\ln \#P'_{m_n, 1}(f, (3l)^{-1})}{m_n}&\ge \varliminf_{n\to \infty}\frac{n}{m_n}\cdot \frac{\ln \#K_n(\gamma,l)-\ln(\gamma n+1)}{n}\\
	&\ge \frac{1}{1+\gamma } \varliminf_{n\to \infty} \frac{\ln \# K_n(\gamma,l)}{n}.
	\end{aligned}
	\end{equation*}

	By  \eqref{eq24}, we have 
	
	\begin{equation*}
	\begin{aligned}
	\varliminf_{l\to\infty}\varliminf_{n\to \infty} \frac{\ln \#P'_{m_n,1}(f, (3l)^{-1})}{m_n} 
	&=\varliminf_{l\to\infty}\varliminf_{n\to \infty} \frac{\ln \#P'_{m_n,1}(f, (3l)^{-1})}{m_n}\\
	&\ge  \frac{1}{1+\gamma } \cdot \lim_{l\to\infty}  \varliminf_{n\to \infty} \frac{\ln \#K_n(\gamma,l)}{n}\\
	&\ge \frac{1}{1+\gamma } \cdot  \varliminf_{l\to \infty} \varliminf_{n\to \infty}  \frac{\ln N_f(n,l^{-1},\delta)}{n}\\
	&=\frac{1}{1+\gamma } h_m(f).
	\end{aligned}
	\end{equation*}
	
	Let $ \gamma\to 0 $, we can get the result. 
	
\end{proof}

\bibliographystyle{plain}
\bibliography{ref}

\end{document}